\newtheorem{cor}{Corollary}[section]
\newtheorem{prop}{Proposition}[section]
\def\bhag#1{\noindent
\section{#1}
}
\def\seqb{{\mathsf{b}}}
\def\RR{\mathbb{R}}
\def\ZZ{{\mathbb Z}}
\def\PP{{\mathbb{P}}}
\def\PPI{{{\rm I}\kern-1pt\Pi}}
\def\a{\alpha}
\def\b #1;{{\bf #1}}
\def\O{{\cal O}}
\def\C{{\mathcal{C}}}
\def\esssup{\mathop{\hbox{{\rm ess sup}}}}
\def\be{\begin{equation}}
\def\ee{\end{equation}}
\def\bea{\begin{eqnarray}}
\def\eea{\end{eqnarray}}
\def\eref#1{(\ref{#1})}
\def\disp{\displaystyle}
\def\donchitre#1#2{\vskip 6.5cm\noindent
\parbox[t]{1in}{\special{eps:#1.eps x=6.5cm y=5.5cm}}
\hbox to 7cm{}\parbox[t]{0.0cm}{\special{eps:#2.eps x=6.5cm y=5.5cm}}}
\def\tn{|\!|\!|}
\def\BB{{\mathbb{B}}}
\def\span{\mathsf{span }}
\def\seqnorm#1{[\![{#1}]\!]}
\begin{document}

\title*{Local approximation using Hermite functions}
% Use \titlerunning{Short Title} for an abbreviated version of
% your contribution title if the original one is too long
\author{H. N. Mhaskar}
% Use \authorrunning{Short Title} for an abbreviated version of
% your contribution title if the original one is too long
\institute{H. N. Mhaskar \at Department of Mathematics, California Institute of Technology, Pasadena, CA 91125, USA and
            Institute of Mathematical Sciences, Claremont Graduate University, Claremont, CA 91711, USA , \email{hrushikesh.mhaskar@cgu.edu}
}
%
% Use the package "url.sty" to avoid
% problems with special characters
% used in your e-mail or web address
%
\maketitle

\abstract*{
We develop a wavelet like representation of functions in $L^p(\mathbb{R})$ based on their Fourier--Hermite coefficients; i.e., we describe an expansion of such functions where the local behavior of the terms characterize completely the local smoothness of the target function. In the case of continuous functions, a similar expansion is given based on the values of the functions at arbitrary points on the real line. In the process, we give new proofs for the localization of certain kernels, as well as some very classical estimates such as the Markov--Bernstein inequality.
}

\abstract{We develop a wavelet like representation of functions in $L^p(\mathbb{R})$ based on their Fourier--Hermite coefficients; i.e., we describe an expansion of such functions where the local behavior of the terms characterize completely the local smoothness of the target function. In the case of continuous functions, a similar expansion is given based on the values of the functions at arbitrary points on the real line. In the process, we give new proofs for the localization of certain kernels, as well as for some very classical estimates such as the Markov--Bernstein inequality.}

\bhag{Introduction}\label{intsect}

The subject of weighted polynomial approximation is by now  fairly 
well studied in approximation theory, with several books (e.g., \cite{mhasbk, safftotikbk, levinlubinsky}) devoted to various aspects of this subject. One of the first papers in the modern theory was by Freud, Giroux, and Rahman \cite{freud_giroux_rahman}. The purpose of this paper is to revisit this theory in the context of approximation by Hermite functions.

To describe our motivation, we consider the case of uniform approximation of periodic functions by trigonometric polynomials. In view of the direct and converse theorems of approximation, both the functions
$$
f_1(x)=\sqrt{|\cos x|}, \qquad f_2(x)=\sum_{k=0}^\infty \frac{\cos(4^kx)}{2^k}, \qquad x\in\RR,
$$
are in the same H\"older class $\mbox{Lip}(1/2)$, with the uniform degree of approximation by trigonometric polynomials of order $<n$ to both of these being $\O(n^{-1/2})$. However, $f_1$ has an analytic extension except at  $x=(2k+1)\pi/2$, $k\in\ZZ$, while $f_2$ is nowhere differentiable. Also, the Fourier coefficients of neither of the two functions reveal this fact. One of the reasons for developing the very popular wavelet analysis is to be able to detect the fact that $f_1$ is  only locally in $\mbox{Lip}(1/2)$ at $x=(2k+1)\pi/2$, $k\in\ZZ$, and infinitely smooth at other points by means of local behavior of the wavelet coefficients of $f_1$ rather than its Fourier coefficients \cite[Chapter~9]{daubbook}. 
Motivated by this theory, we have developed in a series of papers (e.g., \cite{trigwave, loctrigwave, locjacobi, jaenpap, locsmooth, fasttour, expframe, frankbern, heatkernframe, compbio, chuiinterp, treepap}) a theory of wavelet--like representations of functions on the torus, compact interval, sphere, manifolds, and graphs using the expansion coefficients of classical orthogonal systems on these domains, for example, Jacobi polynomials on the interval. 
In this paper, we develop  such a theory for the whole real line using Hermite functions as the underlying orthogonal system. 

Naturally, the basic ideas and ingredients involved this development are the same as in our previous work. However, there are several technical difficulties.  The infinite--finite range inequalities (see 
 Proposition~\ref{spuriousprop}) help us, as expected, to deal with the fact that the domain of approximation here is obviously not compact. 
An additional technical difficulty is the following ``product problem''. The product of two polynomials $P_1$, $P_2$ of degree $<n$ is also a polynomial of degree $<2n$. In contrast, the product of two ``weighted polynomials'' $\exp(-x^2/2)P_1(x)$, $\exp(-x^2/2)P_2(x)$ is not another weighted polynomial. A straightforward attempt to approximate $\exp(-x^2/2)$ by its Taylor polynomial or even the more sophisticated approach described in \cite[Chapter~7]{mhasbk} are not adequate to obtain the correct rates of approximation of such a product with weighted polynomials. 
The other important components in our theory are the availability of localized kernels and quadrature formulas based on arbitrary points on $\RR$. 
While the localization estimates on certain kernels as in Theorem~\ref{hermite_loc_theo} are given in \cite{dziubanski1997triebel, epperson1997hermite}, 
we give a more elementary proof based on the Mehler identity and a new Tauberian theorem proved in \cite{tauberian}. As a consequence, we also give a new proof of certain
classical inequalities such as the estimates on the Christoffel functions and Markov--Bernstein inequalities. 

The paper is organized as follows. We define the basic notations and definitions and summarize some preliminary facts in Section~\ref{notationsect}. In Section~\ref{wtapporoxsect}, we develop the machinery to help us surmount the product problem by reviewing and interpreting certain equivalence theorems from the theory of weighted polynomial approximation.
 Localized kernels will be described next in Section~\ref{summkernsect} (Theorem~\ref{hermite_loc_theo}). These will be used in Section~\ref{summopsect} to develop certain localized, uniformly bounded summability operators (Lemma~\ref{sigmabdlemma}, Theorem~\ref{goodapproxtheo}). In turn, these will be used to give a new proof of the Markov--Bernstein inequality in Corollary~\ref{bernsteincor}. The summability operators are analogues of the shifted average operators in \cite[Section~3.4]{mhasbk}. 
When defined in terms of the Lebesgue measure, they reproduce weighted polynomials. This may not hold when they are defined with other measures. For this purpose, we will prove in Section~\ref{quadsect} the existence of measures supported on an aribitrary set of real numbers which integrate products of weighted polynomials exactly.  Finally, the wavelet--like representation is given in  Section~\ref{mainsect}.

%2
\bhag{Basic notation and definitions}\label{notationsect}
In this section, we collect together different notations and definitions, as well as some preliminary facts which we will use often in this paper.

If $x\in \RR$ and $r\ge 0$, we will write $\BB(x,r)=[x-r,x+r]$.

 Let $\{\psi_j\}$ denote the sequence of orthonormalized Hermite functions; i.e., \cite[Formulas~(5.5.3), (5.5.1)]{szego}
\be\label{hermitedef}
\psi_j(x)= \frac{(-1)^j}{\pi^{1/4}2^{j/2}\sqrt{j!}}\exp(x^2/2)\left(\frac{d}{dx}\right)^j (\exp(-x^2)), \qquad x\in\RR, \ j=0,1,\cdots.
\ee
We note that
\be\label{hermiteortho}
\int_\RR \psi_j(z)\psi_\ell(z)dz =\delta_{j,\ell},\qquad j,\ell=0,1,\cdots.
\ee
We denote $w(x)=\exp(-x^2/2)$. For $t>0$, let $\PP_t$ be the class of all algebraic polynomials of degree $<t$. The space $\Pi_t$ is defined by
\be\label{approxspacedef}
\Pi_t=\span\{\psi_j : \sqrt{j}<t\}=\{wP : P\in\PP_{t^2}\}, \qquad t>0.
\ee

In this paper, the term measure will denote a signed, complex valued Borel measure (or a positive, sigma--finite Borel measure). We recall that if $\mu$ is an extended complex valued Borel measure on $\RR$, then its total variation measure is defined for a Borel set $B$ by
$$
|\mu|(B)=\sup\sum |\mu(B_k)|,
$$
where the sum is over a partition $\{B_k\}$ of $B$ comprising Borel sets, and the supremum is over all such partitions.

\begin{definition}\label{regmeasdef}
 If $t>0$, a Borel measure $\nu$ will be called \textbf{$t$--regular} if there exists a constant $A>0$ such that
\be\label{regularmeasdef}
|\nu|(\BB(x,r))\le A(r+1/t), \qquad x\in\RR, \ r>0.
\ee
We will define the regularity norm of $\nu$ by
\be\label{regularmeasnorm}
\tn\nu\tn_{t}=\sup_{r>0}\frac{|\nu|(\BB(x,r))}{r+1/t}.
\ee
The set of all Borel measures for which $\tn\nu\tn_{t}<\infty$ is a vector space, denoted by $\mathcal{R}_{t}$. \qed
\end{definition}
It is easy to verify that  $\tn \cdot\tn_{t}$ is a norm on $\mathcal{R}_t$. 
It is not difficult to deduce from the definition that
$$
\tn\nu\tn_t\le \max(1, t/u)\tn\nu\tn_u, \qquad t, u>0.
$$
In particular, when  $t<u$, $\mathcal{R}_u\subseteq \mathcal{R}_t$, and for any constant $c>0$, the spaces of measures $\mathcal{R}_t$ and $\mathcal{R}_{ct}$ are the same, with the constants involved in the norm equivalence depending upon $c$.
 
For example, the Lebesgue measure on $\RR$ is in $\mathcal{R}_\infty$, and its regularity norm is obviously $1$. If $\mathcal{C}\subset\RR$,  the \textbf{density content} of $\mathcal{C}$ is defined by
\be\label{meshnormdef}
\delta(\mathcal{C})=\sup_{y,z\in\mathcal{C}}|y-z|.
\ee
If $\mathcal{C}$ is a finite set, and $\nu$ is a measure that associates the mass $1$ with each of these points then $\nu$ is clearly $1/\delta(\mathcal{C})$--regular.
\begin{definition}\label{quadmzmeasdef}
Let $n>0$. A Borel measure $\nu$ on $\RR$ is called \textbf{quadrature measure} of order $n$ if
\be\label{genquad}
\int_\RR P(y)Q(y)dy=\int_\RR P(y)Q(y)d\nu(y), \qquad P, Q\in\Pi_n.
\ee
The set of all  quadrature measures  of order $n$ which are in $\mathcal{R}(n)$ is denoted by $MZ(n)$. \qed
\end{definition}

We note that the formula \eref{genquad} is required for \textbf{products of weighted polynomials}.
Clearly, the Lebesgue measure itself is in $MZ(n)$ for all $n>0$. In Theorem~\ref{quadtheo}, we will prove the existence of  measures in $MZ(n)$ supported on a sufficiently dense set of points in $\RR$.

If $\nu$ is any Borel measure on $\RR$, for $1\le p\le \infty$, and $\nu$--measurable set $B\subseteq \RR$ and $\nu$--measurable function $f:B\to\RR$
$$
\|f\|_{\nu;p,B}:=\left\{\begin{array}{ll}
\disp\left\{\int_B |f(x)|^pd|\nu|(x)\right\}^{1/p}, &\mbox{ if $1\le p<\infty$,}\\
|\nu|-\esssup_{x\in B}|f(x)|, &\mbox{ if $p=\infty$.}
\end{array}\right.
$$
The class of all functions $f$ for which $\|f\|_{\nu;p,B}<\infty$ is denoted by $L^p(\nu;B)$, with the usual convention that functions that are equal $|\nu|$--almost everywhere are considered to be equal. If $\nu$ is the Lebesgue measure, its mention will be omitted from the notation, and if $B=\RR$, its mention will also be omitted from the notation. The set $X^p$ will denote $L^p$ if $1\le p<\infty$, and the set of all continuous functions on $\RR$ which vanish at infinity if $p=\infty$.

\paragraph{\textbf{Constant convention}}

The symbols $c, c_1,\cdots$ will denote generic positive constants depending only on the fixed parameters in the discussion, such as  the norms, smoothness parameters, etc. Their value may be different at different occurrences, even within a single formula. The notation $A\sim B$ means that $c_1A\le B\le c_2A$. \qed

%3
\bhag{Weighted approximation}\label{wtapporoxsect}
In this section, we review some results from \cite{tenswt} for the sake of making this paper more self--contained. The main purpose is to point out Corollary~\ref{wtchangecor}, which will help us later in Section~\ref{mainsect} to get around the difficulty that the product of $P, Q\in \Pi_n$ is not in any $\Pi_{cn}$. 

Let $1\le p\le\infty$, $t>0$. If $f\in L^p$, we define
\be\label{degapproxdef}
E_{t,p}(f)=\inf_{Q\in\Pi_t}\|f-Q\|_p.
\ee

 For $t>0$ and integer $k\ge 0$, the forward difference of a function $f:\RR \to\RR$ is defined by
$$
\Delta_t^k f(x) := \sum_{\ell=0}^{k}(-1)^{k-\ell}\left(\begin{array}{c}
k\\ \ell\end{array}\right)f(x+\ell t).
$$
 With  
$$
Q_\delta (x):= \min \left(\delta^{-1}, (1+x^2)^{1/2}\right), \qquad \delta>0,\ x\in\RR,
$$ 
we define a modulus of smoothness for $f\in L^p$, $\delta>0$ by the formula 
\be\label{upremoddef}
\omega_r(p;f,\delta):= \sum_{k=0}^r \delta^{r-k}\sup_{|t|\le\delta}\|Q_\delta^{r-k}\Delta_t^k f\|_p.
\ee

The results in \cite{tenswt} lead to the following theorem.
\begin{theorem}\label{directconvtheo}
Let $1\le p\le\infty$, $f\in X^p$, $r, n\ge 1$ be  integers. Then
\be\label{directtheo}
E_{n,p}(f)\le c\omega_r(p;f,1/n),
\ee 
and
\be\label{convtheo}
\omega_r(p;f,1/n)\le \frac{c}{n^r}\left\{\|f\|_p+\sum_{k=0}^n (k+1)^{r-1}E_{k,p}(f)\right\}.
\ee
\end{theorem}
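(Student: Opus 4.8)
The plan is to recognize both estimates as the direct (Jackson) and converse (Bernstein--Stechkin) halves of an equivalence theorem for \emph{weighted} polynomial approximation, and then to import these from \cite{tenswt} after fixing the correct dictionary between the two settings. The starting observation is the identity $\Pi_t=\{wP:P\in\PP_{t^2}\}$ from \eqref{approxspacedef}: approximating $f$ from $\Pi_n$ in the $L^p$ norm is exactly weighted polynomial approximation of $f$ by $wP$ with weight $w=\exp(-x^2/2)$ and polynomial degree $<n^2$. Since the Mhaskar--Rakhmanov--Saff scale for this Freud weight at polynomial degree $m$ is of order $\sqrt m$, the choice $m=n^2$ makes the natural scale equal to $n$; this is precisely the scale on which elements of $\Pi_n$ are concentrated (cf. the infinite--finite range inequalities in Proposition~\ref{spuriousprop}), and it is what allows $1/n$ to play the role of the step size in $\omega_r$.

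With this dictionary in hand, the first step is to match the modulus of smoothness \eqref{upremoddef} with the one used in \cite{tenswt}. The factor $Q_\delta(x)=\min(\delta^{-1},(1+x^2)^{1/2})$ is the Ditzian--Totik type companion weight for the Freud weight: away from the turning points it behaves like $(1+x^2)^{1/2}$, and it is capped at $\delta^{-1}$ to absorb the behavior near $\pm 1/\delta$. I would verify that the terms $\delta^{r-k}\sup_{|t|\le\delta}\|Q_\delta^{r-k}\Delta_t^k f\|_p$ reproduce, term by term, the main part and the completion (``tail'') parts of the weighted modulus in \cite{tenswt}, so that the two moduli are equivalent with constants independent of $f$ and $\delta$.

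The direct estimate \eqref{directtheo} then follows immediately from the Jackson theorem of \cite{tenswt} applied at polynomial degree $n^2$, which yields $E_{n,p}(f)\le c\,\omega_r(p;f,1/n)$. For the converse \eqref{convtheo}, the standard route is to pass through the equivalence of $\omega_r$ with an appropriate $K$-functional and then to run the Bernstein--Stechkin telescoping argument: expanding $f$ against a sequence of near-best approximants $Q_k\in\Pi_k$, one estimates the weighted $r$-th derivative of $Q_n$ by iterating the Markov--Bernstein inequality, each application contributing a factor of order $k$, which produces exactly the weighted sum $\sum_{k=0}^n (k+1)^{r-1}E_{k,p}(f)$ together with the $\|f\|_p$ term and the prefactor $n^{-r}$.

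The main obstacle is the modulus matching in the second step, not the abstract machinery: one must check that the position-dependent capping in $Q_\delta$ combines with the forward differences $\Delta_t^k$ to give precisely the weighted modulus of \cite{tenswt} on both the bulk region $\BB(0,cn)$ and its complement, uniformly in $p\in[1,\infty]$ and for the endpoint space $X^\infty$ of continuous functions vanishing at infinity. Once this equivalence is in place, both \eqref{directtheo} and \eqref{convtheo} are restatements of the cited results under the degree substitution $m\mapsto n^2$.
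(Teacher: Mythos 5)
Your proposal matches the paper's route: the paper offers no argument beyond the attribution ``The results in \cite{tenswt} lead to the following theorem,'' and your reconstruction---identifying $\Pi_n$ via \eref{approxspacedef} with weighted polynomial approximation at degree $n^2$ (natural scale $\sim n\sqrt{2}$, step size $1/n$), matching the modulus \eref{upremoddef} with the weighted modulus of \cite{tenswt}, and then invoking the Jackson half for \eref{directtheo} and the Stechkin-type telescoping argument for \eref{convtheo}---is exactly the standard derivation that citation presupposes. The only point worth recording is that your appeal to the Markov--Bernstein inequality in the converse step is not circular within this paper, since Corollary~\ref{bernsteincor} is obtained from the localized kernels of Theorem~\ref{hermite_loc_theo} independently of Theorem~\ref{directconvtheo} (and is in any case available from \cite{tenswt} itself).
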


For the present paper, we need the following equivalence theorem Theorem~\ref{equivtheo} which is obtained from Theorem~\ref{directconvtheo} using standard methods of approximation theory as in \cite{devlorbk}.

For a sequence $\mathbf{a}=\{a_n\}_{n=0}^\infty$, $0<\rho\le \infty$, $\gamma\in (0,\infty)$, we define the sequence (quasi--)norm
\be\label{seqbesovnormdef}
\seqnorm{\mathbf{a}}_{\rho,\gamma}=\left\{\begin{array}{ll}
\left(\sum_{n=0}^\infty (2^{\gamma n}|a_n|)^\rho\right)^{1/\rho}, &\mbox{ if $0<\rho<\infty$,}\\
\sup_{n\ge 0} 2^{n\gamma}|a_n|, &\mbox{ if $\rho=\infty$.}
\end{array}\right.
\ee
The space of all sequences $\mathbf{a}$ with $\seqnorm{\mathbf{a}}_{\rho,\gamma}<\infty$ will be denoted by $\seqb_{\rho,\gamma}$.

\begin{definition}\label{besovspacedef}
Let $1\le p\le \infty$, $0<\rho\le\infty$, $0<\gamma<\infty$. The \textbf{Besov space} $B_{p,\rho,\gamma}$ is the space of all $f\in X^p$ for which $\|f\|_p+\seqnorm{\{E_{2^n,p}(f)\}_{n=0}^\infty} <\infty$. \qed
\end{definition}

\begin{theorem}\label{equivtheo}
Let $0<\rho\le\infty$, $0<\gamma<\infty$, $1\le p\le\infty$, $f\in X^p$, and 
$r>\gamma$ be an integer. Then $f\in B_{p,\rho,\gamma}$ if and only if
$\seqnorm{\{\omega_r(p;f, 1/2^n)\}_{n=0}^\infty} <\infty$.
\end{theorem}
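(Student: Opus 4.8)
The goal is to establish the equivalence between membership in the Besov space $B_{p,\rho,\gamma}$, defined through the decay of the best approximation errors $E_{2^n,p}(f)$, and the summability of the moduli of smoothness $\omega_r(p;f,1/2^n)$. This is a classical type of equivalence in approximation theory, and the plan is to derive it directly from the direct and converse estimates in Theorem~\ref{directconvtheo} using a dyadic discretization argument in the style of \cite{devlorbk}.

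First I would prove the implication that controls the moduli by the approximation errors, which is the ``converse'' direction. Fixing the dyadic scale $n$, I would apply the converse estimate \eref{convtheo} with the index $2^n$ in place of $n$, obtaining
\be\label{myconvstep}
\omega_r(p;f,1/2^n)\le \frac{c}{2^{nr}}\left\{\|f\|_p+\sum_{k=0}^{2^n}(k+1)^{r-1}E_{k,p}(f)\right\}.
\ee
The sum on the right runs over all integers up to $2^n$, whereas the Besov norm only sees the dyadic subsequence $E_{2^m,p}(f)$. Since $E_{k,p}(f)$ is nonincreasing in $k$, I would group the terms $k\in(2^{m-1},2^m]$ and bound each group by its largest term $E_{2^{m-1},p}(f)$ times the number of terms, which is comparable to $2^m$. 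This converts the full sum into a weighted dyadic sum $\sum_{m\le n} 2^{mr}E_{2^{m-1},p}(f)$ (up to constants), after which multiplying by $2^{-nr}\cdot 2^{\gamma n}$, applying a discrete Hardy inequality for the $\seqb_{\rho,\gamma}$ norm, and using $r>\gamma$ to guarantee the geometric summability of the weights $2^{(\gamma-r)(n-m)}$, yields $\seqnorm{\{\omega_r(p;f,1/2^n)\}}_{\rho,\gamma}\le c(\|f\|_p+\seqnorm{\{E_{2^n,p}(f)\}}_{\rho,\gamma})$.

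For the reverse implication (the ``direct'' direction), the argument is shorter: the direct estimate \eref{directtheo} gives $E_{2^n,p}(f)\le c\,\omega_r(p;f,1/2^n)$ immediately at each dyadic level, so applying the $\seqnorm{\cdot}_{\rho,\gamma}$ norm termwise shows that summability of the moduli forces summability of the errors, whence $f\in B_{p,\rho,\gamma}$. One should note that \eref{directtheo} as stated uses $\omega_r(p;f,1/n)$ with a general integer $n$, so specializing to $n=2^m$ is all that is needed here; no regrouping is required.

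The main obstacle I anticipate is the bookkeeping in the converse direction, specifically justifying the discrete Hardy inequality uniformly across the regimes $0<\rho<\infty$ and $\rho=\infty$, since the $\seqb_{\rho,\gamma}$ (quasi-)norm is defined by cases and is only a quasi-norm when $\rho<1$. In the quasi-norm range one cannot simply invoke the triangle inequality, and I would instead use the $\rho$-subadditivity $\big(\sum|a_m+b_m|^\rho\big)\le \sum(|a_m|^\rho+|b_m|^\rho)$ valid for $0<\rho\le 1$, combined with the geometric decay of the kernel $2^{(\gamma-r)(n-m)}$ afforded by the hypothesis $r>\gamma$, to carry out a weighted convolution estimate that works for all admissible $\rho$ at once. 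The additive $\|f\|_p$ term is harmless: since $\gamma>0$, the factors $2^{-nr}2^{\gamma n}\|f\|_p$ are geometrically summable and contribute a controlled multiple of $\|f\|_p$ to the final bound.
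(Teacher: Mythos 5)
Your proof is correct and follows essentially the same route the paper intends: the paper gives no detailed argument for Theorem~\ref{equivtheo}, noting only that it is ``obtained from Theorem~\ref{directconvtheo} using standard methods of approximation theory as in \cite{devlorbk}'', and that standard method is precisely your combination of the termwise direct estimate with the dyadic regrouping of $\sum_{k\le 2^n}(k+1)^{r-1}E_{k,p}(f)$ and a discrete Hardy inequality exploiting $r>\gamma$. Your added care with the quasi-norm case $0<\rho<1$ and the geometrically summable $\|f\|_p$ term supplies exactly the bookkeeping the paper leaves to the reader.
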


A consequence of this theorem is the following. Let $w(x)=\exp(-x^2/2)$. Let $1\le p\le\infty$, $t>0$. If $f\in L^p$, we define
\be\label{tildedegapproxdef}
\tilde{E}_{t,p}(f)=\inf_{R\in\PP_{t^2}}\|f-Rw^2\|_p.
\ee
With $\tilde{f}(x)=f(x/\sqrt{2})$, it is elementary to see that $\tilde{E}_{n,p}(f)\sim E_{n,p}(\tilde{f})$. Since\\ $\omega_r(p;\tilde{f},\delta)\sim \omega_r(p;f,\delta)$ for $\delta>0$, we obtain as a corollary to Theorem~\ref{equivtheo} the following.

\begin{cor}\label{wtchangecor}
Let $0<\rho\le\infty$, $0<\gamma<\infty$, $1\le p\le\infty$, $f\in X^p$. Then $f\in B_{p,\rho,\gamma}$ if and only if
$\seqnorm{\{\tilde{E}_{2^n,p}(f)\}_{n=0}^\infty} <\infty$.
\end{cor}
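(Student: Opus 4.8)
The plan is to reduce Corollary~\ref{wtchangecor} entirely to Theorem~\ref{equivtheo} by verifying the two equivalences asserted in the paragraph immediately preceding it, so that the approximation quantity $\tilde E_{2^n,p}(f)$ may be substituted for $E_{2^n,p}(f)$ inside the sequence norm $\seqnorm{\cdot}_{\rho,\gamma}$. Concretely, Theorem~\ref{equivtheo} already tells us that $f\in B_{p,\rho,\gamma}$ if and only if $\seqnorm{\{\omega_r(p;f,1/2^n)\}}_{\rho,\gamma}<\infty$ for any integer $r>\gamma$, and the definition of $B_{p,\rho,\gamma}$ says this is equivalent to $\|f\|_p+\seqnorm{\{E_{2^n,p}(f)\}}_{\rho,\gamma}<\infty$. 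So it suffices to show that $\seqnorm{\{\tilde E_{2^n,p}(f)\}}_{\rho,\gamma}<\infty$ is equivalent to $\seqnorm{\{E_{2^n,p}(f)\}}_{\rho,\gamma}<\infty$, and the cleanest route is through the stated termwise comparison $\tilde E_{n,p}(f)\sim E_{n,p}(\tilde f)$ together with an equivalence at the level of the moduli.

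First I would make the dilation substitution precise. Writing $\tilde f(x)=f(x/\sqrt 2)$, I would unwind the definition \eref{tildedegapproxdef}: since $Rw^2$ with $R\in\PP_{t^2}$ means $R(x)\exp(-x^2)$, the change of variable $x\mapsto x/\sqrt 2$ in the $L^p$ integral turns $\|f-Rw^2\|_p$ into a constant multiple of $\|\tilde f - \tilde R\, w\|_p$ for a polynomial $\tilde R$ of the same degree, where $w(x)=\exp(-x^2/2)$; by \eref{approxspacedef} the functions $\tilde R w$ with $\deg \tilde R<t^2$ range exactly over $\Pi_t$. Taking the infimum over $R$ on both sides yields $\tilde E_{n,p}(f)\sim E_{n,p}(\tilde f)$, with implicit constants coming only from the Jacobian factor $2^{1/(2p)}$ of the substitution (and independent of $n$). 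The only care needed is tracking that the dilation by $\sqrt 2$ converts the Gaussian weight $w^2=\exp(-x^2)$ precisely into $w=\exp(-x^2/2)$, which is exactly the weight built into the space $\Pi_t$.

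Next I would record that the dilation likewise preserves the modulus of smoothness up to constants, $\omega_r(p;\tilde f,\delta)\sim\omega_r(p;f,\delta)$. This follows by the same change of variables applied to \eref{upremoddef}: the forward differences $\Delta_t^k$ transform with a rescaled step, the factor $Q_\delta(x)=\min(\delta^{-1},(1+x^2)^{1/2})$ is comparable to $Q_\delta(x/\sqrt2)$ uniformly in $x$ and $\delta$ (since $(1+x^2)^{1/2}\sim(1+x^2/2)^{1/2}$), and the $L^p$ norm again picks up only the harmless Jacobian constant. Having these two $n$-uniform comparisons, I would apply Theorem~\ref{equivtheo} to the function $\tilde f$: it gives $\tilde f\in B_{p,\rho,\gamma}$ iff $\seqnorm{\{\omega_r(p;\tilde f,1/2^n)\}}_{\rho,\gamma}<\infty$, which by the modulus comparison is the same condition as for $f$, hence equivalent to $f\in B_{p,\rho,\gamma}$. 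Finally, $\seqnorm{\{\tilde E_{2^n,p}(f)\}}_{\rho,\gamma}\sim\seqnorm{\{E_{2^n,p}(\tilde f)\}}_{\rho,\gamma}$ by the degree comparison (the sequence norm \eref{seqbesovnormdef} is monotone under termwise comparison of nonnegative sequences), and the right side characterizes $\tilde f\in B_{p,\rho,\gamma}$ via Definition~\ref{besovspacedef} and Theorem~\ref{directconvtheo}; stringing these together gives the claimed equivalence.

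I do not expect a genuine obstacle here, since the corollary is essentially a bookkeeping exercise layered on Theorem~\ref{equivtheo}. The one point that deserves attention rather than difficulty is verifying that both comparisons $\tilde E_{n,p}(f)\sim E_{n,p}(\tilde f)$ and $\omega_r(p;\tilde f,\delta)\sim\omega_r(p;f,\delta)$ hold with constants \emph{independent of $n$ and $\delta$}, because the sequence (quasi-)norm $\seqnorm{\cdot}_{\rho,\gamma}$ weights the terms by $2^{\gamma n}$ and a drifting constant could in principle corrupt the equivalence; checking the two substitutions carefully confirms the constants are absolute. A secondary point is handling the $\|f\|_p$ term appearing in Definition~\ref{besovspacedef} separately, but since $\|\tilde f\|_p\sim\|f\|_p$ under the same dilation, this is immediate and the corollary follows.
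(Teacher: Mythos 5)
Your proposal is correct and follows essentially the same route as the paper, which proves the corollary exactly by the dilation $\tilde f(x)=f(x/\sqrt2)$, the two comparisons $\tilde E_{n,p}(f)\sim E_{n,p}(\tilde f)$ and $\omega_r(p;\tilde f,\delta)\sim\omega_r(p;f,\delta)$, and an appeal to Theorem~\ref{equivtheo}. You have merely filled in the change-of-variable details that the paper leaves as ``elementary,'' including the correct observation that the constants are uniform in $n$ and $\delta$.
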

%4
\bhag{Localized  kernels}\label{summkernsect}

If $H :[0,\infty)\to \RR$ is a compactly supported function, we write
\be\label{hermite_kerndef}
\Phi_n(H;x,y)=\sum_{j=0}^\infty H\left(\frac{\sqrt{j}}{n}\right)\psi_j(x)\psi_j(y), \qquad n >0,\ x, y\in\RR.
\ee

\begin{theorem}\label{hermite_loc_theo}
Let $H :\RR\to \RR$ be a compactly supported, infinitely differentiable, even function. For $x, y\in\RR$, $n\ge 1$, $S\ge 3$, we have
\be\label{hermite_lockernest}
|\Phi_n(H;x,y)|\le c\frac{n}{\max(1, (n|x-y|)^S)}, \qquad \left|\frac{\partial}{\partial x}\Phi_n(H; x,y)\right|\le c\frac{n^2}{\max(1, (n|x-y|)^S)},
\ee
where the constants $c$ may depend upon $S$.
\end{theorem}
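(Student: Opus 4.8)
The plan is to express $\Phi_n(H;\cdot,\cdot)$ as a smooth spectral truncation of a measure whose heat (Gauss--Weierstrass) transform is computed in closed form by the Mehler identity, and then to feed this into the Tauberian theorem of \cite{tauberian} in order to turn the Gaussian decay of the transform into the polynomial localization \eref{hermite_lockernest}. For fixed $x,y\in\RR$ I introduce the measure $\mu_{x,y}=\sum_{j\ge0}\psi_j(x)\psi_j(y)\,\delta_{\sqrt j}$ on $[0,\infty)$, so that $\Phi_n(H;x,y)=\int_0^\infty H(u/n)\,d\mu_{x,y}(u)$ and, for $\tau>0$,
\be
K_\tau(x,y):=\int_0^\infty e^{-u^2\tau}\,d\mu_{x,y}(u)=\sum_{j\ge0}e^{-j\tau}\psi_j(x)\psi_j(y).
\ee
The first step is to evaluate $K_\tau$: writing $a=e^{-\tau}$, Mehler's formula for the orthonormal Hermite functions gives
\be
K_\tau(x,y)=\frac{1}{\sqrt{\pi(1-a^2)}}\exp\left(-\frac{1+a}{4(1-a)}(x-y)^2-\frac{1-a}{4(1+a)}(x+y)^2\right),
\ee
whence, for $0<\tau\le1$, the bound $|K_\tau(x,y)|\le c\,\tau^{-1/2}\exp(-(x-y)^2/(c\tau))$.

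The core of the argument, and the step I expect to be most delicate, is the Tauberian passage from this heat bound to \eref{hermite_lockernest}. Because $H$ is even and smooth, $H(\sqrt\lambda/n)$ is a smooth function of the spectral variable $\lambda$, which is exactly the admissibility condition for the multiplier in \cite{tauberian}; this is where the evenness hypothesis is used. To apply the theorem I would verify its two hypotheses: first, a crude polynomial total variation bound $|\mu_{x,y}|([0,\Lambda])\le c\Lambda^2$, coming from the uniform estimate $|\psi_j(x)|\le c$; and second, the Gaussian decay of $K_\tau$ established above, with the localization scale $B\sim(x-y)^2$. Matching the cutoff $H(u/n)$ to the heat scale $\tau\sim1/n^2$, at which $\tau^{-1/2}\sim n$, the theorem then yields for every $S\ge3$
\be
|\Phi_n(H;x,y)|=\left|\int_0^\infty H(u/n)\,d\mu_{x,y}(u)\right|\le\frac{cn}{\max(1,(n|x-y|)^S)},
\ee
which is the first inequality in \eref{hermite_lockernest}. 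The part demanding care is checking that the quantitative form of \cite{tauberian} converts the $\exp(-B/\tau)$--type decay of the heat transform into decay of arbitrary polynomial order $S$ in $n|x-y|$, the order being limited only by the smoothness of $H$, with constants depending only on $S$ and $H$.

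For the derivative I would avoid any appeal to the Markov--Bernstein inequality, which is only established later in Corollary~\ref{bernsteincor}, and instead place the $x$--derivative inside the measure rather than on the (even) test function. Thus I keep the same cutoff $H(u/n)$ but replace $\mu_{x,y}$ by $\mu_{x,y}'=\sum_{j\ge0}\psi_j'(x)\psi_j(y)\,\delta_{\sqrt j}$, so that $\partial_x\Phi_n(H;x,y)=\int_0^\infty H(u/n)\,d\mu_{x,y}'(u)$ and the corresponding heat transform is $\partial_x K_\tau(x,y)$. Differentiating the closed form, $\partial_x K_\tau=K_\tau\cdot\left(-\frac{1+a}{2(1-a)}(x-y)-\frac{1-a}{2(1+a)}(x+y)\right)$; absorbing the resulting polynomial factors into the Gaussian yields, for $0<\tau\le1$, the bound $|\partial_x K_\tau(x,y)|\le c\,\tau^{-1}\exp(-(x-y)^2/(c\tau))$, i.e. one extra power of $\tau^{-1/2}$ compared with $K_\tau$. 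Together with the polynomial total variation bound $|\mu_{x,y}'|([0,\Lambda])\le c\Lambda^3$, which follows from $|\psi_j'(x)|\le c\sqrt j$ via the ladder relation $\psi_j'=\sqrt{j/2}\,\psi_{j-1}-\sqrt{(j+1)/2}\,\psi_{j+1}$ and $|\psi_k|\le c$, the very same Tauberian argument applies; at the scale $\tau\sim1/n^2$ the extra factor $\tau^{-1/2}\sim n$ upgrades the estimate to $cn^2/\max(1,(n|x-y|)^S)$, which is the second inequality in \eref{hermite_lockernest}.
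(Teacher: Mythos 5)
Your overall architecture is exactly the paper's: the measures $\mu_{x,y}=\sum_j\psi_j(x)\psi_j(y)\delta_{\sqrt j}$ and $\mu^{(1)}_{x,y}=\sum_j\psi_j'(x)\psi_j(y)\delta_{\sqrt j}$, the Mehler closed form for the heat transform, the Gaussian bounds $c\,t^{-1/2}e^{-c(x-y)^2/t}$ and $c\,t^{-1}e^{-c(x-y)^2/t}$ (both of which you compute correctly), and then Theorem~\ref{maintaubertheo}. The gap is in how you verify the regularity hypothesis \eref{muchristbd}. In the quantitative Tauberian theorem \cite[Theorem~2.1]{tauberian} the power of $n$ in the conclusion \eref{genlockernest} is $n^Q$, where $Q$ is the exponent in the total variation bound $|\mu|([0,u))\le c(u+2)^Q$; the exponent $C$ in the heat bound \eref{muheatgaussbd} does not enter the conclusion. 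Your ``crude'' bounds $|\mu_{x,y}|([0,\Lambda])\le c\Lambda^2$ (from $|\psi_j|\le c$ and counting $\sim\Lambda^2$ indices with $\sqrt j<\Lambda$) and $|\mu^{(1)}_{x,y}|([0,\Lambda])\le c\Lambda^3$ force $Q=2$ and $Q=3$ respectively, so the theorem delivers only $cn^2/\max(1,(n|x-y|)^S)$ and $cn^3/\max(1,(n|x-y|)^S)$ --- each a full factor of $n$ weaker than \eref{hermite_lockernest}. Your heuristic ``at scale $\tau\sim n^{-2}$ one has $\tau^{-1/2}\sim n$'' correctly predicts the answer but is not what the quantitative theorem gives you.

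The missing ingredient is the Christoffel--function estimate $\sum_{\sqrt j<u}\psi_j(x)^2\le cu$ uniformly in $x$ (and its analogue $\sum_{\sqrt j<u}\psi_j'(x)^2\le cu^3$). The paper obtains these by taking $x=y$ in the very heat--kernel bounds you already derived, so that $\sum_j e^{-jt}\psi_j(x)^2\le ct^{-1/2}$ and $\sum_j e^{-jt}\psi_j'(x)^2\le ct^{-3/2}$, and then invoking a second Tauberian lemma (Theorem~\ref{tauberlemma}, \cite[Lemma~5.2]{eignet}) to convert these into partial--sum bounds. Cauchy--Schwarz then gives $|\mu_{x,y}|([0,u))\le\bigl(\sum_{\sqrt j<u}\psi_j(x)^2\bigr)^{1/2}\bigl(\sum_{\sqrt j<u}\psi_j(y)^2\bigr)^{1/2}\le cu$, i.e.\ $Q=1$, and similarly $|\mu^{(1)}_{x,y}|([0,u))\le c(u^3)^{1/2}(u)^{1/2}=cu^2$, i.e.\ $Q=2$, which is exactly what is needed to produce $n$ and $n^2$. (This also matters for the range of $S$: with $Q=2$ the Tauberian theorem would require $S\ge4$, whereas $Q=1$ accommodates the stated $S\ge3$.) Once you insert this step, the rest of your argument goes through as written.
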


The proof of this theorem requires some preparation. First, we recall some terminology.

A measure $\mu$ on $\RR$ is called an even measure if $\mu((-u,u))=2\mu([0,u))$ for all $u>0$, and $\mu(\{0\})=0$. If $\mu$ is an extended complex valued measure on $[0,\infty)$, and $\mu(\{0\})=0$, we define a measure $\mu_e$ on $\RR$ by 
$$
\mu_e(B)=\mu\left(\{|x| : x\in B\}\right),
$$
and observe that $\mu_e$ is an even measure such that $\mu_e(B)=\mu(B)$ for $B\subset [0,\infty)$. In the sequel, we will assume that all measures on $[0,\infty)$ which do not associate a nonzero mass with the point $0$ are extended in this way, and will abuse the notation $\mu$ also to denote the measure $\mu_e$. In the sequel, the phrase ``measure on $\RR$'' will refer to an extended complex valued Borel measure having bounded total variation on compact intervals in $\RR$, and similarly for measures on $[0,\infty)$.

The proof of Theorem~\ref{hermite_loc_theo} uses two Tauberian theorems. 
The first of these \cite[Theorem~2.1]{tauberian} is the following.

\begin{theorem}\label{maintaubertheo}
Let $\mu$ be an extended complex valued measure on $[0,\infty)$, and $\mu(\{0\})=0$. We assume that there exist $Q, r>0$, such that each of the following conditions are satisfied.
\begin{enumerate}
\item 
\be\label{muchristbd}
\sup_{u\in [0,\infty)}\frac{|\mu|([0,u))}{(u+2)^Q} <\infty,
\ee
\item There are constants $c, C >0$,  such that
\be\label{muheatgaussbd}
\left|\int_\RR \exp(-u^2t)d\mu(u)\right|\le c_1t^{-C}\exp(-r^2/t)\sup_{u\in [0,\infty)}\frac{|\mu|([0,u))}{(u+2)^Q}, \qquad 0<t\le 1.
\ee 
\end{enumerate}
Let $H:[0,\infty)\to\RR$, $S>Q+1$ be an integer, and suppose that there exists a measure $H^{[S]}$ such that
\be\label{Hbvcondnew}
H(u)=\int_0^\infty (y^2-u^2)_+^{S}dH^{[S]}(y), \qquad u\in\RR,
\ee
and
\be\label{Hbvintbdnew}
V_{Q,S}(H)=\max\left(\int_0^\infty (y+2)^Qy^{2S}d|H^{[S]}|(y), \int_0^\infty (y+2)^Qy^Sd|H^{[S]}|(y)\right)<\infty.
\ee
Then for $n\ge 1$,
\be\label{genlockernest}
\left|\int_0^\infty H(u/n)d\mu(u)\right| \le c\frac{n^Q}{\max(1, (nr)^S)}V_{Q,S}(H)\sup_{u\in [0,\infty)}\frac{|\mu|([0,u))}{(u+2)^Q}.
\ee
\end{theorem}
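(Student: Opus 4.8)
The plan is to reduce the estimate \eqref{genlockernest} to a single pointwise bound on the Riesz mean
\[
F(x):=\int_0^\infty (x^2-u^2)_+^S\,d\mu(u),\qquad x>0,
\]
and then to integrate that bound against $dH^{[S]}$. Write $\kappa:=\sup_{u\ge 0}|\mu|([0,u))/(u+2)^Q$. Substituting the representation \eqref{Hbvcondnew} evaluated at $u/n$, using $(y^2-(u/n)^2)_+^S=n^{-2S}((ny)^2-u^2)_+^S$, and interchanging the order of integration (Fubini being justified by \eqref{Hbvintbdnew} together with the growth bound \eqref{muchristbd}), I obtain
\[
\int_0^\infty H(u/n)\,d\mu(u)=n^{-2S}\int_0^\infty F(ny)\,dH^{[S]}(y).
\]
Everything then comes down to proving the pointwise estimate
\[
|F(x)|\le c\,\kappa\,(x+2)^Q\,\frac{x^{2S}}{\max(1,(xr)^S)}.
\]
Granting this, I set $x=ny$, use $ny+2\le n(y+2)$, and split the $y$--integral according to whether $nyr\le 1$ or $nyr>1$: the first region contributes the weight $y^{2S}$, while in the second the gain $x^{2S}/(xr)^S=x^S r^{-S}$ produces the weight $y^S$ and the factor $(nr)^{-S}$. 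These two weights are exactly the two terms defining $V_{Q,S}(H)$ in \eqref{Hbvintbdnew}, and together they yield \eqref{genlockernest}.

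It remains to prove the pointwise estimate, which is the heart of the matter. When $xr\le 1$ the denominator equals $1$ and the bound is immediate, since $|F(x)|\le (x^2)^S|\mu|([0,x))\le \kappa(x+2)^Qx^{2S}$. The real content is the decay factor $(xr)^{-S}$ in the regime $xr>1$, and this is where the Gaussian hypothesis \eqref{muheatgaussbd} is indispensable; the growth bound \eqref{muchristbd} alone gives no decay whatsoever. My approach is to represent the truncated power through the heat transform. Using the inverse Laplace representation $t_+^S=\frac{\Gamma(S+1)}{2\pi i}\int_{(\sigma)}z^{-S-1}e^{zt}\,dz$ with $t=x^2-u^2$ and integrating against $d\mu$, I get
\[
F(x)=\frac{\Gamma(S+1)}{2\pi i}\int_{(\sigma)}z^{-S-1}e^{zx^2}G(z)\,dz,\qquad G(z):=\int_0^\infty e^{-zu^2}\,d\mu(u),
\]
where $G$ is analytic on $\{\mathrm{Re}\,z>0\}$ by \eqref{muchristbd}, and the hypothesis $S>Q+1$ guarantees absolute convergence of the contour integral. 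The idea is then to deform the contour toward the positive real axis, so as to expose the real--axis values of $G$ that are controlled by \eqref{muheatgaussbd}, and to optimise the choice of $\sigma$ against the competing factor $e^{\sigma x^2}$.

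The main obstacle is precisely this transfer from the real--axis Gaussian bound to the contour integral: off the real axis one only has $|G(\sigma+i\tau)|\le\int_0^\infty e^{-\sigma u^2}\,d|\mu|(u)$, which retains none of the cancellation responsible for the factor $\exp(-r^2/t)$. I therefore expect the decisive step to be an approximation of the kernel $(x^2-u^2)_+^S$ by a superposition $\int_0^\infty e^{-u^2 t}\,\Theta(t)\,dt$ of Gaussians, in which the principal contribution is estimated on the real axis by \eqref{muheatgaussbd} while the approximation error is absorbed using the growth bound \eqref{muchristbd} and the condition $S>Q+1$. The polynomial rate $(xr)^{-S}$ should then emerge from the Gaussian factor after the change of variables $t=r^2/v$, which converts $\exp(-r^2/t)$ into $e^{-v}$ and the accompanying power of $t$ into a power of $r$; arranging the bookkeeping so that the resulting exponent is exactly $-S$, uniformly over the relevant range of $x$ and $r$, is the delicate point on which the whole estimate turns.
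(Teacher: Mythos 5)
First, a point of reference: the paper does not prove this statement at all --- it is imported verbatim as \cite[Theorem~2.1]{tauberian} --- so your attempt can only be judged on its own merits. Your opening reduction is correct and is indeed how such arguments are organized: writing $H(u/n)=n^{-2S}\int_0^\infty ((ny)^2-u^2)_+^S\,dH^{[S]}(y)$, applying Fubini (justified, as you say, by \eref{muchristbd} and \eref{Hbvintbdnew}), and then integrating a pointwise bound on $F(x)=\int_0^\infty (x^2-u^2)_+^S\,d\mu(u)$ against $d|H^{[S]}|$ does yield \eref{genlockernest}, with the two weights $y^{2S}$ and $y^S$ in $V_{Q,S}(H)$ arising exactly as you indicate (modulo a small case analysis on whether $nr\le 1$ or $nr>1$). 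The trivial regime $xr\le 1$ is also handled correctly.

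The genuine gap is that the pointwise estimate $|F(x)|\le c\,\kappa\,(x+2)^Q x^{2S}(xr)^{-S}$ for $xr>1$ --- the only place where hypothesis \eref{muheatgaussbd} enters and where the localization factor is produced --- is never proved. Note that this estimate is precisely the theorem itself in the special case $H^{[S]}=\delta_y$, so your reduction, while correct, defers all of the content to the step you leave open. Your contour-integral representation does not close it, for the reason you yourself identify: off the real axis one only controls $|G(\sigma+i\tau)|$ by $\int_0^\infty e^{-\sigma u^2}\,d|\mu|(u)$, which loses the cancellation encoded in $\exp(-r^2/t)$. The proposed repair --- representing $(x^2-u^2)_+^S$ as $\int_0^\infty e^{-u^2t}\,\Theta(t)\,dt$ --- cannot work exactly: any absolutely convergent superposition of Gaussians is real--analytic in $u^2$ and hence cannot vanish on $[x,\infty)$ without vanishing identically, so one is forced into an approximation with an explicit error term. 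That error must simultaneously (i) decay in $u$ faster than $(u+2)^{-Q-1}$ so that it can be absorbed using \eref{muchristbd} alone (this is where $S>Q+1$ is actually needed, not for convergence of the contour integral), (ii) be of total size $O(x^{2S}(xr)^{-S})$, and (iii) concentrate the Gaussian superposition on $t\in(0,1]$, since \eref{muheatgaussbd} is only assumed there. Constructing such an approximation and balancing these three constraints is the analytic heart of \cite[Theorem~2.1]{tauberian}; your proposal names this as ``the delicate point on which the whole estimate turns'' but does not supply it, so the proof is incomplete.
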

The second theorem we need is the following \cite[Lemma~5.2]{eignet}.
 \begin{theorem}\label{tauberlemma}
Let $C>0$, $\{\ell_j\}$ be a non--increasing sequence of non--negative numbers such that $\ell_0=0$ and $\disp \lim_{j\to\infty}\ell_j=\infty$. Let $\{a_j\}$ be a sequence of nonnegative numbers such that $\sum_{j=0}^\infty \exp(-\ell_j^2t)a_j$ converges for $t\in (0,1]$. Then
\be\label{genchristlowbd}
c_1L^C\le \sum_{\ell_j\le L}a_j \le c_2L^C, \qquad L>0,
\ee
if and only if
\be\label{genheatlowbd}
c_3t^{-C/2}\le \sum_{j=0}^\infty \exp(-\ell_j^2t)a_j\le c_4t^{-C/2}, \qquad t\in (0,1].
\ee
\end{theorem}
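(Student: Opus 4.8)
The plan is to recast both conditions in terms of the summatory function $N(L):=\sum_{\ell_j\le L}a_j$, which, under the operative hypothesis that $\{\ell_j\}$ is non-decreasing with $\ell_0=0$ and $\ell_j\to\infty$, is a non-decreasing, right-continuous step function with $N(u)=0$ for $u<0$. The heat sum then becomes a Laplace--Stieltjes transform in the variable $u^2$, namely $\disp\sum_{j}e^{-\ell_j^2t}a_j=\int_{[0,\infty)}e^{-u^2t}\,dN(u)=:\theta(t)$. In this language \eref{genchristlowbd} reads $N(L)\sim L^C$ and \eref{genheatlowbd} reads $\theta(t)\sim t^{-C/2}$, and the claim is the equivalence of these two. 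The atom of $N$ at $0$ (equal to $\sum_{\ell_j=0}a_j$) contributes only a bounded quantity to $\theta$, hence is harmless for the $t^{-C/2}$ asymptotics on $(0,1]$; I will absorb such bounded-range discrepancies into the constants throughout.

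For the Abelian direction ($N(L)\sim L^C\Rightarrow\theta(t)\sim t^{-C/2}$) I would integrate by parts, writing $\disp\theta(t)=2t\int_0^\infty N(u)\,u\,e^{-u^2t}\,du$ up to the bounded boundary contribution; the boundary term at infinity vanishes because $N(u)\le c_2u^C$ kills $e^{-u^2t}$. Inserting the two-sided bound $c_1u^C\le N(u)\le c_2u^C$ and performing the scaling substitution $v=u\sqrt{t}$ turns the integral into a constant multiple of $t^{-C/2}\int_0^\infty v^{C+1}e^{-v^2}\,dv$, which yields $\theta(t)\sim t^{-C/2}$ at once. This direction is entirely routine.

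The Tauberian direction ($\theta\sim t^{-C/2}\Rightarrow N\sim L^C$) I would handle in two stages. The upper bound is immediate: since $e^{-u^2t}\ge e^{-L^2t}$ for $u\le L$, one has $e^{-L^2t}N(L)\le\theta(t)\le c_4t^{-C/2}$, and choosing $t=1/L^2$ (legitimate for $L\ge1$) gives $N(L)\le ec_4L^C$, the bounded range $L<1$ being absorbed into the constant. The lower bound is the genuine Tauberian step and the main obstacle. Here I would use the upper bound on $N$ just obtained to control the tail: fixing $t=1/L^2$, I split $\disp\theta(t)=\int_{[0,AL)}e^{-u^2t}\,dN(u)+\int_{[AL,\infty)}e^{-u^2t}\,dN(u)$ for a parameter $A>1$ to be chosen. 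The first piece is at most $N(AL)$, while the tail piece, after integration by parts and insertion of $N(u)\le c_2u^C$, is bounded by $\disp 2c_2L^C\int_A^\infty v^{C+1}e^{-v^2}\,dv$.

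Combining with the lower bound $\theta(t)\ge c_3t^{-C/2}=c_3L^C$ gives $\disp N(AL)\ge L^C\bigl(c_3-2c_2\int_A^\infty v^{C+1}e^{-v^2}\,dv\bigr)$; since the Gaussian tail integral tends to $0$ as $A\to\infty$, I can fix $A$ so large that the parenthesis exceeds $c_3/2$, whence $N(AL)\ge(c_3/2)L^C=(c_3/2)A^{-C}(AL)^C$. Renaming $AL$ as $L$ produces $N(L)\ge c_1L^C$ for all large $L$, and the bounded range is again absorbed into the constant. The crux of the whole argument is precisely this bootstrap: the a priori upper bound $N(u)\le c_2u^C$ is what lets the Gaussian weight confine the mass of $\theta(t)$ to scales $u\lesssim L$ when $t=1/L^2$, so that a lower bound on $\theta$ forces a lower bound on $N$ at a comparable scale.
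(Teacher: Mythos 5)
First, a point of reference: the paper does not prove this theorem at all --- it is quoted verbatim from \cite[Lemma~5.2]{eignet} --- so there is no in-paper argument to compare yours against. What you propose is the standard Karamata-type proof one would expect: recast the heat sum as the Laplace--Stieltjes transform $\theta(t)=\int_{[0,\infty)}e^{-u^2t}\,dN(u)$ of the counting function $N$, integrate by parts and rescale for the Abelian direction, use $e^{-u^2t}\ge e^{-1}$ on $u\le L=t^{-1/2}$ for the Tauberian upper bound, and bootstrap that upper bound through a Gaussian tail estimate at scale $AL$ for the Tauberian lower bound. These steps are all sound as written (and you are right to read ``non--increasing'' as a typo for ``non--decreasing''; the hypotheses are inconsistent otherwise). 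In particular, the only implication the paper actually uses --- that the upper bound in \eref{genheatlowbd} forces the upper bound in \eref{genchristlowbd}, which is how \eref{christbd} and \eref{diffchristbd} are obtained --- is fully and correctly covered by your two-line argument with $t=1/L^2$.

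The one place where ``absorb the bounded range into the constant'' does not work is the Tauberian lower bound on the window $1\le L<A$. Your argument yields $N(AL)\ge (c_3/2)L^C$ for $L\ge 1$, i.e.\ $N(M)\ge cM^C$ for $M\ge A$; for $M\in[1,A)$ monotonicity only gives $N(M)\ge N(1)\ge 0$, and to close the gap you would need $N(1)>0$, which does not follow from \eref{genheatlowbd}. Concretely, take $C=1$, $\ell_0=0$, $a_0=0$, and $\ell_j=j+9$, $a_j=1$ for $j\ge 1$: then $\sum_j e^{-\ell_j^2t}a_j$ is comparable to $t^{-1/2}$ on $(0,1]$, yet $N(L)=0$ for $L<10$, so the left inequality of \eref{genchristlowbd} fails for small $L$ (and, if $a_0>0$, the right inequality fails as $L\to 0^+$ as well). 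This is really a looseness in the quoted statement rather than a flaw in your method --- the lower bound in \eref{genchristlowbd} is only recoverable for $L$ above a fixed threshold, which is how such lemmas are meant to be read and is all that is ever needed downstream --- but your write-up should conclude ``for all sufficiently large $L$'' rather than claim the full range $L>0$.
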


We are now in a position to prove Theorem~\ref{hermite_loc_theo}. We note that the  estimates \eref{christbd} and \eref{diffchristbd} below were obtained in \cite[Theorem~3.3.4]{mhasbk} assuming the Markov--Bernstein inequality using more complicated machinery. In the present paper, the Markov--Bernstein inequality will be deduced as a consequence of Theorem~\ref{hermite_loc_theo}.

\noindent
\textit{Proof of Theorem~\ref{hermite_loc_theo}.} The starting point of the proof is
the Mehler formula \cite[Formula~(6.1.13)]{andrews_askey_roy}:
For $x,y\in\RR$,  $|r|<1$,
\bea\label{mehler}
\sum_{j=0}^\infty \psi_j(x)\psi_j(y)r^j&=& \frac{1}{\sqrt{\pi (1-r^2)}}\exp\left(\frac{2xyr-(x^2+y^2)r^2}{1-r^2}\right)\exp(-(x^2+y^2)/2)\nonumber\\
&=& \frac{1}{\sqrt{\pi (1-r^2)}}\exp\left(-\frac{r}{1-r^2}(x-y)^2-\frac{1-r}{1+r}\frac{x^2+y^2}{2}\right).
\eea
Writing $r=e^{-t}$, $t>0$, we get the explicit expression for the ``heat kernel'':
\bea\label{hermite_heatkern}
\lefteqn{\sum_{j=0}^\infty e^{-jt}\psi_j(x)\psi_j(y)}\nonumber\\
&=&\frac{e^{t/2}}{\sqrt{2\pi\sinh t}}\exp\left(-\frac{2}{\sinh t}(x-y)^2\right)\exp(-(1/2)\tanh (t/2)(x^2+y^2)).
\eea
Hence,
\be\label{hermite_gaussbd}
\left|\sum_{j=0}^\infty e^{-jt}\psi_j(x)\psi_j(y)\right|\le \frac{c_1}{\sqrt{t}}\exp\left(-\frac{c(x-y)^2}{t}\right), \qquad 0<t\le 1.
\ee
Taking $x=y$ above, we see that 
\be\label{pf1eqn1}
\sum_{j=0}^\infty e^{-jt}\psi_j(x)^2 \le ct^{-1/2}.
\ee
Consequently, Theorem~\ref{tauberlemma} used with $\ell_j=\sqrt{j}$ and $a_j=\psi_j(x)^2$ yields
\be\label{christbd}
\sum_{0\le \sqrt{j}<u} \psi_j(x)^2 \le cu, \qquad u\ge 1.
\ee
We now define a family of measures $\mu_{x,y}$ by
$$
\mu_{x,y}(u)=\sum_{0\le \sqrt{j}<u}\psi_j(x)\psi_j(y),  \qquad u, x, y\in\RR.
$$
Using Schwarz inequality and \eref{christbd}, we conclude that
\be\label{pf1eqn2}
\sup_{u>0}\frac{|\mu_{x,y}|(u)}{u+2}\le c, \qquad x, y\in\RR.
\ee
In view of \eref{hermite_gaussbd},  the estimate \eref{muheatgaussbd} is satisfied by  each  of the measures $\mu_{x,y}$ with $r=|x-y|$.  Moreover, it is clear that $H$ satisfies the conditions required in Theorem~\ref{maintaubertheo}.  Since
$$
\Phi_n(H; x,y)=\int_0^\infty H(u/n)d\mu_{x,y}(u), 
$$
we may use Theorem~\ref{maintaubertheo} with $Q=1$ to arrive at the first inequality in \eref{hermite_lockernest}.

In order to prove the second estimate in \eref{hermite_lockernest}, 
we define a family of measures  $\mu_{x,y}^{(1)}$ by
$$
 \mu_{x,y}^{(1)}(u)= \sum_{0\le \sqrt{j}<u}\psi_j'(x)\psi_j(y), \qquad u, x, y\in\RR,
$$
and observe that
$$
 \frac{\partial}{\partial x}\Phi_n(H;x,y)=\int_0^\infty H(u/n)d\mu_{x,y}^{(1)}(u), \qquad x, y\in \RR.
$$
We will  verify that \eref{muheatgaussbd} is satisfied by  each  of the measures $\mu_{x,y}^{(1)}$ with $r=|x-y|$, and
\be\label{pf1eqn3}
\sup_{u>0}\frac{|\mu_{x,y}^{(1)}|(u)}{(u+2)^2}\le c, \qquad x, y\in\RR.
\ee
An application of Theorem~\ref{maintaubertheo} with $Q=2$ then implies the desired second inequality in \eref{hermite_lockernest} as before.

Since $\psi_n'(x)=\sqrt{2n}\psi_{n-1}(x)-x\psi_n(x)$ (cf. \cite[Eqn.~(5.5.1), (5.5.10)]{szego}), it follows from \eref{christbd} that $\|\psi_n'\|_\infty \le cn^2$. Therefore, we may differentiate  the left hand side of \eref{hermite_heatkern} term by term to obtain for $t>0$
\bea\label{hermite_diffheatkern}
\lefteqn{\sum_{j=0}^\infty e^{-jt}\psi_j'(x)\psi_j(y)=\frac{e^{t/2}}{\sqrt{2\pi\sinh t}}\left\{\frac{4(y-x)}{\sinh t}-x\tanh (t/2)\right\}\times}\nonumber\\
&&\qquad\qquad
\exp\left(-\frac{2}{\sinh t}(x-y)^2-(1/2)\tanh (t/2)(x^2+y^2)\right), \eea
and
\bea\label{diffheatkerntwice}
\lefteqn{\sum_{j=0}^\infty e^{-jt}\psi_j'(x)\psi_j'(y)}\nonumber\\
&=&\frac{e^{t/2}}{\sqrt{2\pi\sinh t}}\left\{\frac{4}{\sinh t}+\left(\frac{4(y-x)}{\sinh t}-x\tanh (t/2)\right)\left(\frac{4(x-y)}{\sinh t}-y\tanh (t/2)\right)\right\}\times\nonumber\\
&&\qquad\exp\left(-\frac{2}{\sinh t}(x-y)^2-(1/2)\tanh (t/2)(x^2+y^2)\right).\nonumber\\
\eea
Since $\disp\max_{x\in\RR}|x|^m\exp(-ax^2)=(2a/(em))^{-m/2}$, $m=1,2,\cdots$, we deduce from \eref{hermite_diffheatkern} and \eref{diffheatkerntwice} that for $0<t\le 1$,
\be\label{hermite_diffgaussbd}
\left|\sum_{j=0}^\infty e^{-jt}\psi_j'(x)\psi_j(y)\right| \le \frac{c_1}{t}\exp\left(-\frac{c(x-y)^2}{t}\right), \quad \sum_{j=0}^\infty e^{-jt}\psi_j'(x)^2 \le ct^{-3/2}.
\ee
Thus,  each  of the measures $\mu_{x,y}^{(1)}$ satisfies \eref{muheatgaussbd} with $r=|x-y|$.
Using Theorem~\ref{tauberlemma}  with $\psi_j'(x)^2$ in place of $a_j$,  \eref{hermite_diffgaussbd} leads to
\be\label{diffchristbd}
 \sum_{0\le \sqrt{j}<u} \psi_j'(x)^2 \le cu^3, \qquad u\ge 1.
\ee 
Therefore, using Schwarz inequality and \eref{christbd}, we conclude that
 for $u\ge 1$,
$$
|\mu_{x,y}^{(1)}|(u)\le \sum_{0\le \sqrt{j}<u}|\psi_j'(x)\psi_j(y)| \le cu^2.
$$
This leads to \eref{pf1eqn3}, and completes the proof of the second inequality in \eref{hermite_lockernest} as explained before. \qed

%4
\bhag{Summability operators}\label{summopsect}

\begin{definition}\label{lowpassfilterdef}
A function $h:\RR\to [0,1]$ is called a \textbf{low pass filter} if each of the following conditions is satisfied.
\begin{enumerate}
\item $h$ is an even, infinitely differentiable function on $\RR$,
\item $h(u)=1$ for $|u|\le 1/2$,
\item $h$ is non--increasing on $[1/2,1]$,
\item $h(u)=0$ if $|u|\ge 1$. \qed
\end{enumerate}
\end{definition}

In the sequel we will fix an infinitely differentiable low pass filter $h$, and will omit its mention from the notations, unless necessary to avoid confusion. In particular, the constants may depend upon $h$.

Let $n>0$, $\nu$ be a Borel measure on $\RR$, $f \in L^1(\nu)+L^\infty$, and $x\in\RR$. We define
\be\label{fourcoeffdef}
\hat{f}(\nu;j)=\int_\RR f(y)\psi_j(y)d\nu(y), \qquad j=0,1,\cdots,
\ee
and with $\Phi_n(x,y)=\Phi_n(h;x,y)$ as defined in \eref{hermite_kerndef},
\be\label{sigmaopdef}
\sigma_n(\nu;f,x)=\sigma_n(h;\nu;f,x)=\int_\RR \Phi_n(x,y)f(y)d\nu(y) =\sum_{j=0}^\infty h(\sqrt{j}/n)\hat{f}(\nu;j)\psi_j(x).
\ee
As usual, we will omit the mention of $\nu$ if $\nu$ is the Lebesgue measure on $\RR$, e.g.,
\be\label{fourcoeffdefbis}
\hat{f}(j)=\int_\RR f(y)\psi_j(y)dy, \qquad j=0,1,\cdots.
\ee
In this section, we will also find it useful to introduce the notation
\be\label{diffsigmaopdef}
\sigma_n^{(1)}(f,x)=\frac{d}{dx}\sigma_n(f,x), \qquad x\in\RR, \ f\in L^1+L^\infty.
\ee

The main theorem of this section is the following.

\begin{theorem}\label{goodapproxtheo}
Let $n>0$, $\nu\in MZ(n)$. If $P\in\Pi_{n/2}$ then $\sigma_n(\nu;P)=P$. If $1\le p\le \infty$ and $f\in L^p$, then
\be\label{goodapprox}
E_{n,p}(f)\le \|\sigma_n(\nu;f)-f\|_p \le cE_{n/2,p}(f).
\ee
\end{theorem}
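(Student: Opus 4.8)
The plan is to establish the three assertions separately. The reproduction identity is where the quadrature hypothesis $\nu\in MZ(n)$ enters essentially; the two inequalities in \eref{goodapprox} then follow from it together with the membership $\sigma_n(\nu;f)\in\Pi_n$ and a uniform operator bound for $\sigma_n(\nu;\cdot)$.

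First I would prove that $\sigma_n(\nu;P)=P$ for $P\in\Pi_{n/2}$. Writing $P=\sum_{\sqrt{k}<n/2}b_k\psi_k$, I observe from \eref{sigmaopdef} that only the indices $j$ with $h(\sqrt{j}/n)\neq 0$, i.e. with $\sqrt{j}<n$, contribute to $\sigma_n(\nu;P)$; for every such $j$ we have $\psi_j\in\Pi_n$, and since also $P\in\Pi_{n/2}\subseteq\Pi_n$, the quadrature property \eref{genquad} gives $\hat{P}(\nu;j)=\int_\RR P\psi_j\,d\nu=\int_\RR P\psi_j\,dy=b_j$ (with the convention $b_j=0$ when $\sqrt{j}\ge n/2$), using the orthonormality \eref{hermiteortho}. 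Substituting back into \eref{sigmaopdef} and using $h(\sqrt{j}/n)=1$ for $\sqrt{j}<n/2$ collapses the series to $\sum_{\sqrt{j}<n/2}b_j\psi_j=P$.

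The lower bound in \eref{goodapprox} is then immediate: the series in \eref{sigmaopdef} runs only over $j$ with $\sqrt{j}<n$, so $\sigma_n(\nu;f)\in\Pi_n$, whence $E_{n,p}(f)=\inf_{Q\in\Pi_n}\|f-Q\|_p\le\|\sigma_n(\nu;f)-f\|_p$. For the upper bound I would run the standard de la Vall\'ee Poussin argument: choose $P^*\in\Pi_{n/2}$ with $\|f-P^*\|_p\le 2E_{n/2,p}(f)$ and use the reproduction identity $\sigma_n(\nu;P^*)=P^*$ to write
\[
\sigma_n(\nu;f)-f=\sigma_n(\nu;f-P^*)+(P^*-f).
\]
The triangle inequality bounds $\|\sigma_n(\nu;f)-f\|_p$ by $\|\sigma_n(\nu;f-P^*)\|_p+\|f-P^*\|_p$, so the whole estimate reduces to controlling $\|\sigma_n(\nu;f-P^*)\|_p$ by $c\,E_{n/2,p}(f)$.

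The main obstacle is precisely this uniform boundedness of $\sigma_n(\nu;\cdot)$, which I expect to be supplied by Lemma~\ref{sigmabdlemma}. I would derive it from a Schur-type argument built on the localization estimate $|\Phi_n(x,y)|\le cn/\max(1,(n|x-y|)^S)$ of Theorem~\ref{hermite_loc_theo}: splitting $\int|\Phi_n(x,y)|\,d|\nu|(y)$ over $\BB(x,1/n)$ and the dyadic annuli $\BB(x,2^{k+1}/n)\setminus\BB(x,2^k/n)$ and invoking the $n$-regularity $|\nu|(\BB(x,r))\le\tn\nu\tn_n\,(r+1/n)$ shows this integral is bounded uniformly in $x$ and $n$ as soon as $S>1$; the companion integral $\int|\Phi_n(x,y)|\,dx$ against Lebesgue measure is handled identically, and Schur's test then yields an estimate of the form $\|\sigma_n(\nu;g)\|_p\le c\|g\|_{\nu;p}$. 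The delicate bookkeeping is that this bound is in the $\nu$-norm, whereas the right side of \eref{goodapprox} carries the Lebesgue quantity $E_{n/2,p}(f)$; reconciling the two is the subtle point, and I anticipate it to rest on the regularity and quadrature structure of $MZ(n)$ rather than on the (routine) telescoping above.
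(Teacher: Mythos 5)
Your proposal is correct and follows essentially the same route as the paper: the reproduction identity comes from applying the quadrature property \eref{genquad} to products of $P$ with members of $\Pi_n$ (you do it termwise with $P\psi_j$, the paper does it with $P(y)\Phi_n(x,y)$ --- the same computation summed over $j$), the lower bound is immediate from $\sigma_n(\nu;f)\in\Pi_n$, and the upper bound is the near--best--approximation argument resting on the uniform bound of Lemma~\ref{sigmabdlemma}, which is itself proved by exactly the dyadic--annuli/regularity/Schur argument you sketch (Lemma~\ref{fundalemma} plus Riesz interpolation). The ``delicate bookkeeping'' you flag at the end is a genuine point but is treated no more carefully in the paper: Lemma~\ref{sigmabdlemma} bounds $\|\sigma_n(\nu;g)\|_p$ by $c\|g\|_{\nu;p}$, while the paper's proof simply writes $c\|f-P\|_p$, which is literally justified when $\nu$ is the Lebesgue measure or when $p=\infty$ --- the only cases in which the theorem is invoked later for non--Lebesgue (discrete) $\nu$.
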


In preparation for the proof of this theorem, we first prove two  lemmas.

\begin{lemma}\label{fundalemma}
If $t>0$,  $\nu\in\mathcal{R}_t$, $r>0$, $S\ge 2$, and $x\in\RR$, then
\be\label{fundaineq}
\int_{\RR\setminus\BB(x,r)}|y-x|^{-S}d|\nu|(y) \le \frac{2^S}{2^S-2}\tn\nu\tn_t r^{-S+1}(2+1/(rt)).
\ee
In particular, if $n>0$, and $\nu\in\mathcal{R}_n$ then 
\be\label{kernbds}
\int_\RR|\Phi_n(x,y)|d|\nu|(y)\le c\tn\nu\tn_n, \qquad \int_\RR \left|\frac{\partial}{\partial x}\Phi_n(x,y)\right|d|\nu|(y)\le cn\tn\nu\tn_n.
\ee
\end{lemma}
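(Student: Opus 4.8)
The plan is to prove \eref{fundaineq} by a dyadic annular decomposition of the exterior region, and then obtain \eref{kernbds} by splitting the real line into a piece near the diagonal and its complement, feeding the localization estimate \eref{hermite_lockernest} into \eref{fundaineq} on the far piece.

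For \eref{fundaineq}, I would write $\RR\setminus\BB(x,r)=\bigcup_{k\ge 0}A_k$, where $A_k=\{y:2^kr\le |y-x|<2^{k+1}r\}$. On $A_k$ one has the pointwise bound $|y-x|^{-S}\le (2^kr)^{-S}$, while $A_k\subseteq\BB(x,2^{k+1}r)$ and the regularity hypothesis $\nu\in\mathcal{R}_t$ give $|\nu|(A_k)\le\tn\nu\tn_t(2^{k+1}r+1/t)$. Multiplying and summing over $k$ produces two geometric series: the term carrying $2^{k+1}r$ sums with ratio $2^{-(S-1)}$ (convergent precisely because $S\ge 2$) and contributes the $r^{-S+1}$ piece, while the term carrying $1/t$ sums with ratio $2^{-S}$ and contributes the $r^{-S}/t$ piece. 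Evaluating $\sum 2^{-k(S-1)}=2^{S-1}/(2^{S-1}-1)=2^S/(2^S-2)$ and $\sum 2^{-kS}=2^S/(2^S-1)\le 2^S/(2^S-2)$, and factoring out $r^{-S+1}$, collapses both contributions under the common constant $2^S/(2^S-2)$ and yields exactly the stated bound with the factor $(2+1/(rt))$.

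For \eref{kernbds} I would split $\RR=\BB(x,1/n)\cup\bigl(\RR\setminus\BB(x,1/n)\bigr)$ and use Theorem~\ref{hermite_loc_theo}. On $\BB(x,1/n)$ the estimate \eref{hermite_lockernest} gives $|\Phi_n(x,y)|\le cn$, and since $|\nu|(\BB(x,1/n))\le\tn\nu\tn_n(1/n+1/n)=2\tn\nu\tn_n/n$ for $\nu\in\mathcal{R}_n$, this part contributes at most $c\tn\nu\tn_n$. On the complement, $n|x-y|\ge 1$, so $\max(1,(n|x-y|)^S)=(n|x-y|)^S$ and $|\Phi_n(x,y)|\le cn^{1-S}|x-y|^{-S}$; applying \eref{fundaineq} with $r=1/n$ and $t=n$ bounds the resulting integral by $cn^{1-S}\cdot\tn\nu\tn_n n^{S-1}(2+1)=c\tn\nu\tn_n$. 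The two pieces together give the first inequality. The derivative estimate is identical line by line, using the second bound in \eref{hermite_lockernest}, which replaces $n$ by $n^2$ everywhere: the near piece becomes $cn^2\cdot(2\tn\nu\tn_n/n)=cn\tn\nu\tn_n$, and the far piece becomes $cn^{2-S}\cdot\tn\nu\tn_n n^{S-1}=cn\tn\nu\tn_n$, yielding the factor $n$ in the second inequality.

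I do not expect a serious obstacle here, since both steps reduce to summing geometric series and bookkeeping constants. The one point requiring care is ensuring that the $1/t$ (respectively $1/n$) term in the regularity bound does not destroy summability: it is governed by the faster-decaying ratio $2^{-S}$, so it is harmless, and it is the $r$-carrying term with ratio $2^{-(S-1)}$ that forces the hypothesis $S\ge 2$ and fixes the leading constant. The only other thing to watch is the bookkeeping of the radius $r=1/n$ and the exponent $S$ when chaining \eref{fundaineq} into the far estimate, so that the powers of $n$ cancel exactly as claimed.
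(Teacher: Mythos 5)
Your proof is correct and follows essentially the same route as the paper: the same dyadic annular decomposition of $\RR\setminus\BB(x,r)$ with the regularity bound applied to each annulus, the same pair of geometric series, and the same near/far splitting at radius $1/n$ combined with \eref{hermite_lockernest} for the kernel bounds. Your constant bookkeeping (collapsing both series under $2^S/(2^S-2)$ and tracking the powers of $n$) matches the paper's computation exactly.
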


\begin{proof}\ %of Lemma~\ref{fundalemma}.
By re--normalization if necessary, we may assume in this proof that $\tn\nu\tn_t=1$. Then \eref{regularmeasnorm} can be used to deduce that
\begin{eqnarray*}
\int_{\RR\setminus\BB(x,r)}|y-x|^{-S}d|\nu|(y)&=&\sum_{j=0}^\infty \int_{\BB(x,2^{j+1}r)\setminus \BB(x, 2^jr)} |y-x|^{-S}d|\nu|(y)\\
& \le& \sum_{j=0}^\infty (2^jr)^{-S}|\nu|(\BB(x,2^{j+1}r)) \\
&\le& \sum_{j=0}^\infty (2^jr)^{-S}(2^{j+1}r+1/t)=\frac{2^S r^{-S+1}}{2^{S-1}-1}+\frac{2^Sr^{-S}}{(2^S-1)t}\\
&\le& \frac{2^Sr^{-S+1}}{2^S-2}(2+1/(rt)).
\end{eqnarray*}
Using the first estimate in \eref{hermite_lockernest} with $S\ge 2$, we deduce from \eref{fundaineq} (with $n$ in place of $t$) that
\begin{eqnarray*}
\int_\RR|\Phi_n(x,y)|d|\nu|(y)&=& \int_{\BB(x,1/n)} |\Phi_n(x,y)|d|\nu|(y)+ \int_{\RR\setminus\BB(x,1/n)}|\Phi_n(x,y)|d|\nu|(y)\\
&\le& cn\left\{|\nu|(\BB(x,1/n))+n^{-S}n^{S-1}\right\}\le c.
\end{eqnarray*}
The second estimate in \eref{kernbds} is proved in the same way using the second estimate in \eref{hermite_lockernest}. \qed
 \end{proof}
 
 As a consequence of this lemma, we obtain the following.
 
\begin{lemma}\label{sigmabdlemma}
Let $n>0$, $\mu, \nu\in\mathcal{R}_n$, and $1\le p\le\infty$. Then
\be\label{sigmaopbds}
\|\sigma_n(\nu;f)\|_{\mu;p}\le c\|f\|_{\nu;p}, \qquad f\in L^p(\nu), 
\ee
\be\label{diffsigmaopbds}
\|\sigma_n^{(1)}(f)\|_p\le cn\|f\|_p, \qquad f\in L^p(\nu)
\ee
\end{lemma}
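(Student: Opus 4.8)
The plan is to read both inequalities as $L^p$-boundedness statements for integral operators and to prove them by the Schur test (generalized Young inequality), extracting the necessary ``row'' and ``column'' sum bounds directly from Lemma~\ref{fundalemma}. The elementary fact I will use is the following: if $K(x,y)$ is a kernel and $\nu,\mu$ are measures with $\sup_x\int|K(x,y)|d|\nu|(y)\le M_1$ and $\sup_y\int|K(x,y)|d|\mu|(x)\le M_2$, then the operator $Tf(x)=\int K(x,y)f(y)d\nu(y)$ satisfies $\|Tf\|_{\mu;p}\le M_1^{1-1/p}M_2^{1/p}\|f\|_{\nu;p}$ for every $1\le p\le\infty$. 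For $1<p<\infty$ one factors $|K|\,d|\nu|$ and applies H\"older in $y$ to get $|Tf(x)|^p\le M_1^{p-1}\int|K(x,y)||f(y)|^pd|\nu|(y)$, then integrates in $x$ against $d|\mu|$ and uses Tonelli with the column bound; the endpoints $p=1$ (Tonelli) and $p=\infty$ (the row bound holds pointwise in $x$) are immediate and require no interpolation.

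For \eref{sigmaopbds} I would take $K(x,y)=\Phi_n(x,y)$, which is symmetric in $x$ and $y$. The first inequality in \eref{kernbds} applied to $\nu$ supplies the row bound $M_1=c\tn\nu\tn_n$, while the same inequality applied to $\mu$, using the symmetry of $\Phi_n$ to interchange the variables, supplies the column bound $M_2=c\tn\mu\tn_n$. The Schur test then gives
\[
\|\sigma_n(\nu;f)\|_{\mu;p}\le c\,(\tn\nu\tn_n)^{1-1/p}(\tn\mu\tn_n)^{1/p}\|f\|_{\nu;p},
\]
and since $\mu,\nu\in\mathcal{R}_n$ the two regularity norms are finite constants, yielding \eref{sigmaopbds}.

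For \eref{diffsigmaopbds} I would first note that the rapid decay of $\frac{\partial}{\partial x}\Phi_n$ in \eref{hermite_lockernest} furnishes a uniform (on compact $x$-sets) integrable majorant, legitimizing differentiation under the integral sign in \eref{sigmaopdef}, so that $\sigma_n^{(1)}(f,x)=\int_\RR \frac{\partial}{\partial x}\Phi_n(x,y)f(y)\,dy$ with $\nu$ the Lebesgue measure. I then run the Schur test with $K(x,y)=\frac{\partial}{\partial x}\Phi_n(x,y)$. The row bound is the second inequality in \eref{kernbds} with Lebesgue measure, giving $M_1=cn$. This kernel is no longer symmetric, but the pointwise estimate in \eref{hermite_lockernest} is a function of $|x-y|$ alone, so the column integral $\int_\RR|\frac{\partial}{\partial x}\Phi_n(x,y)|\,dx$ is bounded by exactly the dyadic annular computation used to prove Lemma~\ref{fundalemma}, now with the roles of $x$ and $y$ interchanged, giving $M_2=cn$ as well. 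The Schur test then produces $\|\sigma_n^{(1)}(f)\|_p\le (cn)^{1-1/p}(cn)^{1/p}\|f\|_p=cn\|f\|_p$.

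I expect the argument to be essentially routine once Lemma~\ref{fundalemma} is available; the two points deserving care are the column-sum bound for the non-symmetric derivative kernel in \eref{diffsigmaopbds}, which I handle by exploiting that the pointwise bound in \eref{hermite_lockernest} depends on $x$ and $y$ only through $|x-y|$ (so the annular estimate is symmetric in the two variables), and the justification for differentiating under the integral sign, which is secured by the integrable majorant coming from \eref{hermite_lockernest}. Building the endpoints $p=1,\infty$ directly into the Schur test avoids any separate interpolation step.
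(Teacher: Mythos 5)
Your proof is correct and follows essentially the same route as the paper: both arguments rest entirely on the row/column kernel bounds supplied by Lemma~\ref{fundalemma} (i.e.\ \eref{kernbds} together with the symmetry of $\Phi_n$, and for the derivative kernel the fact that the bound in \eref{hermite_lockernest} depends only on $|x-y|$). The only difference is that you package the endpoint bounds via the Schur test with H\"older for $1<p<\infty$, whereas the paper proves the $p=1$ and $p=\infty$ cases separately and invokes the Riesz interpolation theorem; these are interchangeable here, and your explicit treatment of the non-symmetric derivative kernel is a point the paper passes over silently.
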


\begin{proof}\ % of Lemma~\ref{sigmabdlemma}.
In view of \eref{kernbds}, for all $x\in \RR$, and $f\in L^\infty(\nu)$,
$$
|\sigma_n(\nu;f,x)|\le \int_\RR |\Phi_n(x,y)||f(y)|d|\nu|(y)\le c\|f\|_{\nu;\infty},
$$
and similarly, using Tonnelli's theorem, if $f\in L^1(\nu)$,
\begin{eqnarray*}
\int_\RR |\sigma_n(\nu;f,x)|d|\mu|(x)&\le& \int_\RR\int_\RR |\Phi_n(x,y)||f(y)|d|\nu|(y)d|\mu|(x)\\
&=&\int_\RR\int_\RR |\Phi_n(y,x)||f(y)|d|\mu|(x)d|\nu|(y)\le c\|f\|_{\nu;1}.
\end{eqnarray*}
The estimate \eref{sigmaopbds} follows from these and the Riesz interpolation theorem. The proof of \eref{diffsigmaopbds} is similar. \qed
\end{proof}

We are now in a position to prove Theorem~\ref{goodapproxtheo}.

\textit{Proof of Theorem~\ref{goodapproxtheo}.}
We recall that $h(u)=1$ if $|u|\le 1/2$. If $P\in \Pi_{n/2}$, then for $x\in\RR$,
$$
P(x)=\sum_{0\le k <n/2}\hat{P}(j)\psi_j(x)=\sum_{k=0}^\infty h(\sqrt{j}/n)\hat{P}(j)\psi_j(x)=\sigma_n(P,x)=\int_\RR P(y)\Phi_n(x,y)dy.
$$
Since $\nu\in MZ(n)$, the definition \eref{genquad} now shows that
$$
P(x)=\int_\RR P(y)\Phi_n(x,y)d\nu(y)=\sigma_n(\nu;P,x).
$$

The first inequality in \eref{goodapprox} is obvious. In view of
 Lemma~\ref{sigmabdlemma}, we obtain for any $P\in\Pi_{n/2}$,
$$
\|\sigma_n(\nu;f)-f\|_p= \|\sigma_n(\nu;f-P)-(f-P)\|_p\le  c\|f-P\|_p .
$$
This leads to the second inequality in \eref{goodapprox}. \qed

We end this section by pointing out that the estimate \eref{diffsigmaopbds} leads immediately to the following Markov--Bernstein inequality. This deduction is the same in spirit as that given in \cite{mhasbk}, but we consider it to be a new proof, since the proof of \eref{diffsigmaopbds} is significantly different from that in \cite{mhasbk}.

\begin{cor}\label{bernsteincor}
 For $1\le p\le \infty$,
\be\label{bernineq}
\|P'\|_p\le cn\|P\|_p, \qquad n>0, \ P\in\Pi_n.
\ee
\end{cor}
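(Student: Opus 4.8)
The plan is to reduce this inequality directly to the Bernstein-type bound \eref{diffsigmaopbds} that has already been established for the summability operator, exploiting the reproduction property of $\sigma_n$ with respect to the Lebesgue measure recorded in Theorem~\ref{goodapproxtheo}. The key observation is that although \eref{diffsigmaopbds} is a statement about an arbitrary $f\in L^p(\nu)$, when $f$ is itself a weighted polynomial in the reproducing range of $\sigma$, the operator acts as the identity on $f$, so that differentiating it produces exactly $P'$. First I would note that every $P\in\Pi_n$ lies in $L^p$ for all $1\le p\le\infty$: since $P=wQ$ with $w(x)=\exp(-x^2/2)$ and $Q\in\PP_{n^2}$ by \eref{approxspacedef}, the Gaussian factor dominates the polynomial growth, so $P$ is in the domain of all the operators involved.

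The detailed steps are short. Take $P\in\Pi_n$. Since $\Pi_n=\Pi_{(2n)/2}$, the reproduction assertion of Theorem~\ref{goodapproxtheo}, applied with the Lebesgue measure (which lies in $\mathcal{R}_m$ for every $m>0$ with regularity norm $1$, hence is a quadrature measure of every order), yields $\sigma_{2n}(P)=P$. Differentiating this identity and invoking the definition \eref{diffsigmaopdef} gives $P'=\sigma_{2n}^{(1)}(P)$. I would then apply \eref{diffsigmaopbds} with $2n$ in place of $n$ and $\nu$ equal to the Lebesgue measure, obtaining
\be
\|P'\|_p=\|\sigma_{2n}^{(1)}(P)\|_p\le c(2n)\|P\|_p\le cn\|P\|_p,
\ee
where the harmless factor $2$ is absorbed into the generic constant per the constant convention. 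This establishes \eref{bernineq}.

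The genuine content of the argument is therefore entirely contained in \eref{diffsigmaopbds}, which in turn rests on the derivative localization estimate for the kernel in the second inequality of \eref{hermite_lockernest} and the consequent integrability bound in \eref{kernbds}. The only points that require a word of care are routine: the interchange of differentiation and integration in passing from $\sigma_{2n}(P)$ to $\sigma_{2n}^{(1)}(P)$ is legitimate because $\partial_x\Phi_{2n}(x,y)$ is integrable against the Lebesgue measure by \eref{kernbds}; the membership $P\in L^p$ noted above ensures all the operators are well defined on $P$; and the replacement of $\Pi_n$ by the reproducing range $\Pi_{(2n)/2}$ is exactly why one works at scale $2n$. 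I do not anticipate a substantive obstacle here, since the difficult analysis has been front-loaded into the kernel estimates of Theorem~\ref{hermite_loc_theo} and the operator bound of Lemma~\ref{sigmabdlemma}; the present deduction is simply the standard observation that a Bernstein inequality follows from a uniformly bounded reproducing operator whose derivative gains one power of $n$.
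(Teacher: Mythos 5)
Your argument is correct and coincides with the paper's own proof: both apply the reproduction property $\sigma_{2n}(P)=P$ from Theorem~\ref{goodapproxtheo} to write $P'=\sigma_{2n}^{(1)}(P)$ and then invoke \eref{diffsigmaopbds}. The additional remarks on integrability and the interchange of differentiation and integration are sound but routine.
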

\begin{proof}\ % of Corollary~\ref{bernsteincor}.
If $P\in \Pi_n$, Theorem~\ref{goodapproxtheo} shows that $\sigma_{2n}(P)=P$, so that $P'=\sigma_{2n}^{(1)}(P)$.
 The inequality \eref{bernineq} follows from this and \eref{diffsigmaopbds}. \qed
\end{proof}

%5

\bhag{Quadrature formula}\label{quadsect}
In this section, we wish to demonstrate the existence of measures in $MZ(n)$, supported on sufficiently dense finite point sets in $\RR$, in the sense made precise below. We recall that if $\mathcal{C}\subset\RR$,  the density content of $\mathcal{C}$ is defined by
\be\label{meshnormdefbis}
\delta(\mathcal{C})=\sup_{y,z\in\mathcal{C}}|y-z|.
\ee

\begin{theorem}\label{quadtheo}
There exists $C, \a>0$ with the following property: With $A_n=(n\sqrt{2})(1+Cn^{-4/3})$,  if $\mathcal{C}=\{y_1<\cdots<y_{M+1}\}\subset \RR$,
 $[-A_n,A_n]\subseteq [y_1,y_{M+1}]$, and   $\delta(\C)\le c$, then there exist real numbers $w_1,\cdots, w_M$ such that with $n=\a\delta(\C)^{-1}$,
\be\label{quadrature}
\int_\RR P(y)Q(y)dy=\sum_{k=1}^M w_kP(y_k)Q(y_k), \qquad P, Q\in \Pi_n,
\ee
and
\be\label{quadwtbd}
|w_k|\le c|y_{k+1}-y_k|, \qquad k=1,\cdots,M.
\ee
In particular, the measure $\nu$ that associates the mass $w_k$ with each of the points $y_k$  is in $MZ(n)$. Further,
 if $[y_1,y_{M+1}]\subset [-cn^\beta, cn^\beta]$ for some $\beta>0$, then 
\be\label{bdvar}
\sum_{k=1}^M|w_k|\le cn^\beta.
\ee
\end{theorem}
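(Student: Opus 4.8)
\emph{Plan.} The products $PQ$ with $P,Q\in\Pi_n$ span the finite--dimensional space $\mathcal V:=w^2\PP_{2n^2}$: writing $P=wP_1$, $Q=wQ_1$ with $P_1,Q_1\in\PP_{n^2}$ gives $PQ=w^2P_1Q_1\in w^2\PP_{2n^2}$. By linearity it therefore suffices to produce weights with
\begin{equation*}
\int_\RR F(y)\,dy=\sum_{k=1}^M w_kF(y_k),\qquad F\in\mathcal V, \tag{A}
\end{equation*}
together with the bound $|w_k|\le c|y_{k+1}-y_k|$. I would obtain both at once from the Marcinkiewicz--Zygmund inequality
\begin{equation*}
\left|\int_\RR F(y)\,dy\right|\le c\sum_{k=1}^M |y_{k+1}-y_k|\,|F(y_k)|,\qquad F\in\mathcal V, \tag{$\star$}
\end{equation*}
by Hahn--Banach duality. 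Equip $\RR^M$ with the norm $\|v\|_\sharp=\sum_k|y_{k+1}-y_k|\,|v_k|$, whose dual norm is $\max_k|w_k|/|y_{k+1}-y_k|$, and let $E\colon\mathcal V\to\RR^M$, $EF=(F(y_k))_{k=1}^M$. Inequality $(\star)$ says exactly that $EF\mapsto\int_\RR F$ is a well--defined functional of norm $\le c$ on the subspace $E(\mathcal V)\subset(\RR^M,\|\cdot\|_\sharp)$; extending it to $v\mapsto\sum_k w_kv_k$ with the same norm bound yields weights satisfying both (A) and \eref{quadwtbd}.

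So the crux is $(\star)$, and the two ingredients are a Bernstein inequality and a finite--range inequality for $\mathcal V$. The Bernstein inequality is the \textbf{main obstacle}, because $\mathcal V$ is not a space $\Pi_m$ and one cannot usefully differentiate a product $PQ$ term by term --- this is precisely the ``product problem.'' I would bypass it with the dilation $x=u/\sqrt2$, which turns $w^2(x)=e^{-x^2}$ into $w(u)=e^{-u^2/2}$. Writing $F=w^2R\in\mathcal V$ and $\Psi(u):=F(u/\sqrt2)=\exp(-u^2/2)R(u/\sqrt2)$, the polynomial $R(\cdot/\sqrt2)$ has degree $<2n^2=(\sqrt2\,n)^2$, so $\Psi\in\Pi_{\sqrt2\,n}$. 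Since $F(x)=\Psi(\sqrt2\,x)$, a change of variables gives $\|F'\|_1=\|\Psi'\|_1$ and $\|F\|_1=2^{-1/2}\|\Psi\|_1$, whence Corollary~\ref{bernsteincor} applied to $\Psi$ yields
\begin{equation*}
\|F'\|_1\le cn\|F\|_1,\qquad F\in\mathcal V. \tag{B}
\end{equation*}
The same dilation transports the infinite--finite range inequality (Proposition~\ref{spuriousprop}) for $\Pi_{\sqrt2\,n}$: its edge $2n$ in the $u$--variable, with the Hermite soft--edge correction $1+Cn^{-4/3}$, corresponds to $A_n=\sqrt2\,n(1+Cn^{-4/3})$ in the $x$--variable, giving
\begin{equation*}
\int_{\RR\setminus[-A_n,A_n]}|F|\le \tfrac14\|F\|_1,\qquad F\in\mathcal V. \tag{C}
\end{equation*}
This is where the constant $C$ in the statement is fixed.

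With (B) and (C) in hand, $(\star)$ would follow by comparing $\int F$ with its left--endpoint Riemann sum. On each cell $[y_k,y_{k+1}]$, $k\le M$, one has $\int_{y_k}^{y_{k+1}}|F-F(y_k)|\le\delta(\C)\int_{y_k}^{y_{k+1}}|F'|$, and since $[-A_n,A_n]\subseteq[y_1,y_{M+1}]$ the part of $\int_\RR F$ outside $[y_1,y_{M+1}]$ is controlled by (C). Summing over $k$, using (B) and $\delta(\C)=\a/n$, and abbreviating $D:=\sum_{k}|y_{k+1}-y_k|\,|F(y_k)|$, I get both
\begin{equation*}
\|F\|_1\le D+c\a\|F\|_1+\tfrac14\|F\|_1,\qquad \left|\int_\RR F\right|\le D+c\a\|F\|_1+\tfrac14\|F\|_1 .
\end{equation*}
Choosing $\a$ small enough that $c\a\le\tfrac14$ (this fixes the constant $\a$), the first estimate gives $\|F\|_1\le 2D$, and substituting this into the second yields $|\int_\RR F|\le 2D$, which is $(\star)$. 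This self--referential absorption is exactly where the density hypothesis $n=\a\delta(\C)^{-1}$ enters.

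Finally, the measure $\nu=\sum_{k=1}^M w_k\delta_{y_k}$ satisfies \eref{quadrature} by (A), i.e.\ the quadrature condition of Definition~\ref{quadmzmeasdef}. For regularity, the weight bound and telescoping give, for any $x\in\RR$, $r>0$,
\begin{equation*}
|\nu|(\BB(x,r))=\sum_{y_k\in\BB(x,r)}|w_k|\le c\!\!\sum_{y_k\in\BB(x,r)}\!\!(y_{k+1}-y_k)\le c(2r+\delta(\C))\le c(r+1/n),
\end{equation*}
so $\tn\nu\tn_n\le c$ and hence $\nu\in MZ(n)$. The bound \eref{quadwtbd} is the Hahn--Banach output, and \eref{bdvar} is then immediate, since when $[y_1,y_{M+1}]\subset[-cn^\beta,cn^\beta]$ one has $\sum_k|w_k|\le c\sum_k(y_{k+1}-y_k)=c(y_{M+1}-y_1)\le cn^\beta$.
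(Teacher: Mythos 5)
Your proof is correct and follows essentially the same route as the paper: a Marcinkiewicz--Zygmund inequality derived from the $L^1$ Markov--Bernstein inequality plus the infinite--finite range inequality, followed by Hahn--Banach duality, with the $\sqrt2$ dilation resolving the product problem. The only (cosmetic) difference is that you transport the Bernstein and range inequalities to the product space $w^2\PP_{2n^2}$ and argue directly in the $y$--variable, whereas the paper first proves the single--polynomial quadrature (Theorem~\ref{singquadtheo}) for $\Pi_{n\sqrt2}$ in the dilated variable and changes variables only at the end.
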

 
 This theorem will be deduced by making some changes in variable in the following theorem.
 \begin{theorem}\label{singquadtheo}
There exists $C, \a_1>0$ with the following property: With $A'_n=2n(1+Cn^{-4/3})$,  if $\mathcal{C'}=\{x_1<\cdots<x_{M+1}\}\subset \RR$,
 $[-A'_n,A'_n]\subseteq [x_1,x_{M+1}]$, and   $\delta(\C')\le c$, then there exist real numbers $\tilde{w}_1,\cdots, \tilde{w}_M$ such that with $n=\a_1\delta(\C')^{-1}$,
\be\label{singquadrature}
\int_\RR P(x)dx=\sum_{k=1}^M \tilde{w}_kP(x_k), \qquad P\in \Pi_{n\sqrt{2}},
\ee
and
\be\label{singquadwtbd}
|\tilde{w}_k|\le c|x_{k+1}-x_k|, \qquad k=1,\cdots,M.
\ee
\end{theorem}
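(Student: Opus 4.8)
The plan is to reduce the statement to a Marcinkiewicz--Zygmund (MZ) inequality and then manufacture the weights $\tilde w_k$ by a Hahn--Banach duality argument, so that the exactness \eref{singquadrature} and the bound \eref{singquadwtbd} emerge together. Writing $d_k=|x_{k+1}-x_k|$, the single estimate I would establish is the lower MZ inequality
\be\label{planmz}
\|P\|_1\le c\sum_{k=1}^M d_k|P(x_k)|,\qquad P\in\Pi_{n\sqrt2},
\ee
valid for suitably small $\alpha_1$ and large $C$. Everything else in the proof is soft.

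To prove \eref{planmz}, I note first that the hypotheses give $n\,\delta(\C')=\alpha_1$ and $d_k\le\delta(\C')$ for each $k$. Since $[-A'_n,A'_n]\subseteq[x_1,x_{M+1}]$, the part of $\int_\RR|P|$ lying outside $[x_1,x_{M+1}]$ is contained in $\{|x|>A'_n\}$. Every $P\in\Pi_{n\sqrt2}$ has the form $wR$ with $w(x)=\exp(-x^2/2)$ and $\deg R<2n^2$, and the Mhaskar--Rakhmanov--Saff number for this weight at degree $2n^2$ is exactly $2n$, so that $[-A'_n,A'_n]=[-2n(1+Cn^{-4/3}),2n(1+Cn^{-4/3})]$ is a slight enlargement of the essential support; the infinite--finite range inequality (Proposition~\ref{spuriousprop}) then yields $\int_{|x|>A'_n}|P|\le\eta\|P\|_1$ with $\eta$ as small as desired once $C$ is large. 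On each node interval the fundamental theorem of calculus gives $|P(x)|\le|P(x_k)|+\int_{x_k}^{x_{k+1}}|P'|$ for $x\in[x_k,x_{k+1}]$, whence $\int_{x_k}^{x_{k+1}}|P|\le d_k|P(x_k)|+\delta(\C')\int_{x_k}^{x_{k+1}}|P'|$; summing over $k$ and adding the tail estimate produces
\be\label{planmzstep}
\|P\|_1\le\sum_{k=1}^M d_k|P(x_k)|+\delta(\C')\|P'\|_1+\eta\|P\|_1.
\ee
The Markov--Bernstein inequality of Corollary~\ref{bernsteincor}, applied with $n\sqrt2$ in place of $n$, gives $\|P'\|_1\le cn\|P\|_1$, so that $\delta(\C')\|P'\|_1\le c\,n\,\delta(\C')\|P\|_1=c\alpha_1\|P\|_1$. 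Choosing $\alpha_1$ and $C$ so that $c\alpha_1+\eta\le1/2$ lets me absorb the last two terms of \eref{planmzstep} into the left-hand side, yielding \eref{planmz}.

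Granting \eref{planmz}, I equip $\RR^M$ with the weighted $\ell^1$ norm $\|\mathbf a\|=\sum_k d_k|a_k|$ and consider the sampling map $T:\Pi_{n\sqrt2}\to\RR^M$, $TP=(P(x_k))_{k=1}^M$; by \eref{planmz}, $T$ is injective and $\|P\|_1\le c\|TP\|$. The integration functional $\Lambda(P)=\int_\RR P$ obeys $|\Lambda(P)|\le\|P\|_1\le c\|TP\|$, so $\lambda(TP):=\Lambda(P)$ is a well-defined linear functional of norm at most $c$ on the subspace $T(\Pi_{n\sqrt2})\subseteq(\RR^M,\|\cdot\|)$. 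Extending it by Hahn--Banach to all of $\RR^M$ without increasing the norm, and identifying the dual of the weighted $\ell^1$ norm with the weighted $\ell^\infty$ norm, I obtain reals $\tilde w_1,\dots,\tilde w_M$ with $\max_k|\tilde w_k|/d_k\le c$, i.e.\ \eref{singquadwtbd}, such that $\int_\RR P=\sum_k\tilde w_k P(x_k)$ for all $P\in\Pi_{n\sqrt2}$, which is \eref{singquadrature}.

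The main obstacle is the MZ inequality \eref{planmz}, and within it the joint calibration of the two free constants: $\alpha_1$ must be small enough for the Bernstein term in \eref{planmzstep} to be absorbed, while $C$ must be large enough for the mass outside $[-A'_n,A'_n]$ to be negligible, and these requirements interact through the degree $2n^2$ and the scale $n=\alpha_1/\delta(\C')$. The duality step is entirely routine once \eref{planmz} is in hand. The one genuinely delicate input is the quantitative infinite--finite range inequality in $L^1$: with the enlargement $\epsilon=Cn^{-4/3}$ and degree $N\sim n^2$, the standard soft-edge bound gives an exponent $N\epsilon^{3/2}\sim C^{3/2}$, so the tail is controlled by $e^{-cC^{3/2}}$ uniformly in $n$, which is exactly why the specific form $A'_n=2n(1+Cn^{-4/3})$ is needed.
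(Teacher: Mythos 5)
Your proposal is correct and follows essentially the same route as the paper: a one-sided Marcinkiewicz--Zygmund inequality obtained from the infinite--finite range inequality of Proposition~\ref{spuriousprop} together with the Markov--Bernstein inequality of Corollary~\ref{bernsteincor}, followed by a Hahn--Banach extension of the integration functional in the weighted $\ell^1$--$\ell^\infty$ duality. The only cosmetic differences are that the paper records the two-sided MZ inequality (Lemma~\ref{mzlemma}) while you keep only the lower bound actually needed, and that the paper takes the constant $C$ directly from Proposition~\ref{spuriousprop} with the fixed tail factor $1/8$ rather than treating $C$ as a tunable parameter.
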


The proof of Theorem~\ref{singquadtheo} follows the now standard methods (e.g., \cite{mnw1, bitrep, jaenpap, modlpmz}). We first use the Markov--Bernstein inequality \eref{bernineq} with $p=1$ to prove 
the so called Marcinkiewicz--Zygmund inequalities (Lemma~\ref{mzlemma} below), and then use the Hahn--Banach theorem.

Before starting this program, we recall some finite--infinite range inequalities.

\begin{prop}\label{spuriousprop}
Let $n>0$, $1\le p, r\le\infty$, $P\in \Pi_n$. Then
\be\label{rangeineq}
\|P\|_{p,\RR\setminus [-2n,2n]}\le c\exp(-c_1n)\|P\|_{r, [-2n,2n]}.
\ee
Moreover, there exists $D>0$ such that with $B_n=(n\sqrt{2})(1+Dn^{-4/3})$, we have for $n\ge c$,
\be\label{rangeineql1}
\int_{\RR\setminus [-B_n, B_n]}|P(x)|dx \le (1/8)\int_{-B_n}^{B_n} |P(x)|dx.
\ee
\end{prop}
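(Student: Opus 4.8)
The plan is to read both estimates as infinite--finite range inequalities for the Freud weight $w=\exp(-x^2/2)$. Writing $P=wR$ with $R$ an algebraic polynomial of degree $<n^2$ (so $N:=n^2$ is the degree bound), the relevant Mhaskar--Rakhmanov--Saff number for the external field $x^2/2$ and degree $N$ is $a_N=\sqrt{2N}=n\sqrt2$. Thus the crude interval $[-2n,2n]$ in \eref{rangeineq} sits a \emph{fixed} factor $\sqrt2$ beyond $a_N$, which leaves room for exponential decay, whereas $[-B_n,B_n]$ in \eref{rangeineql1} lies only inside the soft--edge window $a_N(1+\O(N^{-2/3}))=n\sqrt2(1+\O(n^{-4/3}))$ about the endpoint; this is the source of the exponent $-4/3$.

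For \eref{rangeineq} I would first prove the pointwise tail bound. On $[-n,n]$ one has $\|R\|_{\infty,[-n,n]}\le e^{n^2/2}\|P\|_{\infty,[-n,n]}$, and the extremal growth of Chebyshev polynomials gives $|R(x)|\le T_N(x/n)\,\|R\|_{\infty,[-n,n]}\le (2x/n)^{N}\|R\|_{\infty,[-n,n]}$ for $x\ge n$. Hence for $x\ge 2n$,
\[
|P(x)|\le \exp\!\Big(-\tfrac{x^2}{2}+N\log(2x/n)+\tfrac{n^2}{2}\Big)\,\|P\|_{\infty,[-2n,2n]},
\]
and since the exponent is negative and strictly decreasing on $[2n,\infty)$ (its derivative is $\le -3x/4$ there), it is bounded above by $-\eta n^2-\tfrac38(x^2-4n^2)$ for a fixed $\eta>0$. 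Integrating this Gaussian majorant over $\{|x|\ge 2n\}$ yields $\|P\|_{p,\RR\setminus[-2n,2n]}\le c\,e^{-\eta n^2}\|P\|_{\infty,[-2n,2n]}$ for every $p$. To replace the right--hand norm by the $L^r$ norm I use the reproducing identity $P=\sigma_{2n}(P)$ from Theorem~\ref{goodapproxtheo} (with Lebesgue measure, which lies in $MZ(2n)$) together with the kernel bound \eref{hermite_lockernest}: H\"older's inequality and the substitution $u=2n(y-x)$ give $\|\Phi_{2n}(x,\cdot)\|_{r'}\le c\,n^{1/r}$, hence the Nikolskii inequality $\|P\|_{\infty}\le c\,n^{1/r}\|P\|_{r}$. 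Feeding the already--established tail smallness back into $\|P\|_{r,\RR}^r=\|P\|_{r,[-2n,2n]}^r+\|P\|_{r,\RR\setminus[-2n,2n]}^r$ shows $\|P\|_{r,\RR}\le 2^{1/r}\|P\|_{r,[-2n,2n]}$ for $n$ large, so that $\|P\|_{\infty,[-2n,2n]}\le c\,n^{1/r}\|P\|_{r,[-2n,2n]}$; the polynomial factor $n^{1/r}$ is absorbed into $e^{-\eta n^2}$, giving \eref{rangeineq} with $c_1<\eta n$ (the finitely many small $n$ absorbed into the constants).

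The estimate \eref{rangeineql1} is the genuinely delicate one, and I expect it to be the main obstacle. Here the crude Chebyshev bound is useless, since $B_n$ is only $n^{-4/3}$ beyond the edge $a_N=n\sqrt2$, where the weighted polynomial is still of the same order of magnitude as on the bulk. The fraction $1/8$ must come from a \emph{sharp} pointwise infinite--finite range inequality valid up to the soft edge, i.e. a bound $w(x)|R(x)|\le \Psi(|x|/a_N)\,\|wR\|_{1}$ whose majorant $\Psi$ decays across the transition layer of width $\sim a_N N^{-2/3}=\sqrt2\,n^{-1/3}$. This is exactly the Plancherel--Rotach / Airy regime; the correct way to obtain it is through the equilibrium measure of the quadratic external field and the associated potential estimate (as in \cite{mhasbk, levinlubinsky}), so that choosing $D$ large enough pushes $B_n$ far enough into the decaying part of the transition layer to force the tail integral below $\tfrac18$ of the whole. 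I would therefore invoke the sharp inequality from the weighted--approximation literature, verifying only that the quadratic field produces the MRS number $n\sqrt2$ and the soft--edge exponent $n^{-4/3}$, rather than reproving it from scratch.
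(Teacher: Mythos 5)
Your proposal is correct, and for \eref{rangeineq} it takes a genuinely different route from the paper, whose entire proof is a citation of \cite[Proposition~6.2.8]{mhasbk} for the first estimate and of \cite[Corollary~2.1]{bitrep} for the second. You instead prove \eref{rangeineq} from scratch: writing $P=wR$ with $R\in\PP_{n^2}$, the Chebyshev growth bound $|R(x)|\le (2|x|/n)^{n^2}\|R\|_{\infty,[-n,n]}$ yields the Gaussian majorant $\exp\bigl(n^2(\log 4-3/2)-\tfrac38(x^2-4n^2)\bigr)\|P\|_{\infty,[-2n,2n]}$ on $\{|x|\ge 2n\}$ (your bookkeeping checks out: the exponent is negative at $x=2n$ and its derivative $-x+n^2/x$ is $\le -3x/4$ there), and the passage from the $L^\infty$ to the $L^r$ norm on the right is handled by the Nikolskii inequality $\|P\|_\infty\le cn^{1/r}\|P\|_{r}$ extracted from \eref{hermite_lockernest} together with a standard absorption of the tail. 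This is legitimate and non-circular, since Theorem~\ref{goodapproxtheo} and the localization estimates do not depend on Proposition~\ref{spuriousprop}; what your route buys is self-containedness and the stronger rate $\exp(-cn^2)$. One caveat: the absorption step forces $n\ge c$, and the small values of $n$ cannot simply be ``absorbed into the constants'' --- for $r<\infty$ and $P=\psi_0$ the right-hand side of \eref{rangeineq} tends to $0$ as $n\to 0^+$ while the left-hand side does not, so the implicit hypothesis $n\ge c$ of the cited source must be read into the statement; this is a defect of the statement rather than of your argument. For \eref{rangeineql1} you do exactly what the paper does, namely defer to the sharp soft-edge infinite--finite range inequality in the literature, and your explanation of why a crude argument cannot work --- the Mhaskar--Rakhmanov--Saff number for $\exp(-x^2/2)$ and degree $N=n^2$ is $a_N=n\sqrt2$, and $B_n$ sits only $O(a_NN^{-2/3})=O(n^{-1/3})$ beyond it, inside the Plancherel--Rotach transition layer --- is the correct diagnosis of where the exponent $-4/3$ comes from.
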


\begin{proof}\ %of Proposition~\ref{spuriousprop}.
 The estimate \eref{rangeineq} is proved in \cite[Proposition~6.2.8]{mhasbk} (and its proof). 
The estimate \eref{rangeineql1} is proved in \cite[Corollary~2.1]{bitrep}. (To reconcile the notation in \cite{bitrep}, we use $\a=2$ and $2n^2$ in place of $n$ which yields the interval denoted there by $\Delta_{n,\a}$ to be of the form $[-B_n, B_n]$  with a suitable value of $D$.) 
 \qed
\end{proof}

\begin{lemma}\label{mzlemma}
We assume the set up in Theorem~\ref{singquadtheo}. Then 
\be\label{pre_mzineq}
(3/4)\int_\RR |P(x)|dx \le \sum_{k=1}^M (x_{k+1}-x_k)|P(x_k)|\le (5/4)\int_\RR |P(x)|dx, \qquad P\in\Pi_{n\sqrt{2}}.
\ee
\end{lemma}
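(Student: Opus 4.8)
The plan is to compare the Riemann-type sum in \eref{pre_mzineq} with the integral $\int_\RR|P|$ in two stages: first on the finite interval $[x_1,x_{M+1}]$, where the discrepancy between the sum and the integral is controlled by the Markov--Bernstein inequality, and then accounting for the mass of $P$ outside this interval via the finite--infinite range inequality \eref{rangeineql1}.

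For the first stage, I would fix $P\in\Pi_{n\sqrt2}$ and estimate, on each subinterval $[x_k,x_{k+1}]$,
$$\left|\int_{x_k}^{x_{k+1}}|P(x)|\,dx-(x_{k+1}-x_k)|P(x_k)|\right|\le (x_{k+1}-x_k)\int_{x_k}^{x_{k+1}}|P'(u)|\,du,$$
which follows from $\big||P(x)|-|P(x_k)|\big|\le\int_{x_k}^{x_{k+1}}|P'(u)|\,du$ for $x\in[x_k,x_{k+1}]$. Since $x_{k+1}-x_k\le\delta(\C')=\a_1/n$, summing over $k$ telescopes the integrals and gives
$$\left|\int_{x_1}^{x_{M+1}}|P(x)|\,dx-\sum_{k=1}^M(x_{k+1}-x_k)|P(x_k)|\right|\le\frac{\a_1}{n}\int_\RR|P'(u)|\,du.$$
Applying Corollary~\ref{bernsteincor} with $p=1$ to $P\in\Pi_{n\sqrt2}$ yields $\|P'\|_1\le cn\|P\|_1$, so the right-hand side is at most $c\a_1\int_\RR|P|$. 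I will fix $\a_1>0$ once and for all, small enough that $c\a_1\le 1/8$; note that $n=\a_1\delta(\C')^{-1}$ together with $\delta(\C')\le c$ makes $n$ as large as needed.

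For the second stage, I would apply Proposition~\ref{spuriousprop}, specifically \eref{rangeineql1}, with $n\sqrt2$ in place of $n$; this produces the interval $[-B_{n\sqrt2},B_{n\sqrt2}]$ with $B_{n\sqrt2}=2n(1+D'n^{-4/3})$ for a suitable $D'$. Choosing the constant $C$ in $A'_n=2n(1+Cn^{-4/3})$ large enough that $A'_n\ge B_{n\sqrt2}$, the hypothesis $[-A'_n,A'_n]\subseteq[x_1,x_{M+1}]$ forces $[-B_{n\sqrt2},B_{n\sqrt2}]\subseteq[x_1,x_{M+1}]$, whence $\int_{\RR\setminus[x_1,x_{M+1}]}|P|\le\int_{\RR\setminus[-B_{n\sqrt2},B_{n\sqrt2}]}|P|\le(1/8)\int_\RR|P|$. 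Consequently $(7/8)\int_\RR|P|\le\int_{x_1}^{x_{M+1}}|P|\le\int_\RR|P|$.

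Writing $I=\int_\RR|P|$, $I'=\int_{x_1}^{x_{M+1}}|P|$, and $S$ for the sum in \eref{pre_mzineq}, the two stages give $|S-I'|\le (1/8)I$ and $(7/8)I\le I'\le I$, which combine at once to $S\le I'+(1/8)I\le(9/8)I\le(5/4)I$ and $S\ge I'-(1/8)I\ge(7/8)I-(1/8)I=(3/4)I$, as claimed. The only delicate point, and the one I expect to require care, is the bookkeeping of constants: one must choose $\a_1$ small (to force the Bernstein error below $1/8$) and $C$ large (to guarantee the range inclusion) simultaneously and independently of $\C'$. Both choices are absolute, depending only on the fixed filter $h$ and on $D$, so no circularity arises from $n$ being defined through $\a_1$.
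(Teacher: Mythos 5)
Your proposal is correct and follows essentially the same route as the paper: the same per-interval estimate $\bigl|\int_{x_k}^{x_{k+1}}|P|-(x_{k+1}-x_k)|P(x_k)|\bigr|\le (x_{k+1}-x_k)\int_{x_k}^{x_{k+1}}|P'|$, the Markov--Bernstein inequality with $p=1$ to absorb the resulting $c\,\a_1\int_\RR|P|$ term by taking $\a_1$ small, and the finite--infinite range inequality \eref{rangeineql1} (with $C$ matched to $D$ so that $[-B_{n\sqrt2},B_{n\sqrt2}]\subseteq[x_1,x_{M+1}]$) to control the tail. The only difference is cosmetic bookkeeping of the $1/8$'s, which in both cases lands within the $[3/4,5/4]$ window.
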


\begin{proof}\ % of Lemma~\ref{mzlemma}
 Let $P\in \Pi_{n\sqrt{2}}$, and $C=2^{-2/3}D$, where $D$ is defined in Proposition~\ref{spuriousprop}. Since  $[-A'_n,A'_n]\subseteq [x_1,x_{M+1}]$,   we obtain from \eref{rangeineql1} that for $n\ge c$ 
\be\label{pf3eqn1}
\int_{\RR\setminus [x_1, x_{M+1}]}|P(x)|dx \le (1/8)\int_{x_1}^{x_{M+1}}|P(x)|dx.
\ee
 For $k=1,\cdots,M$, we have
\begin{eqnarray*}
\lefteqn{\left|\int_{x_k}^{x_{k+1}}|P(x)|dx-(x_{k+1}-x_k)|P(x_k)|\right| \le \int_{x_k}^{x_{k+1}}\left||P(x)|-|P(x_k)|\right|dx}\\
&\le& \int_{x_k}^{x_{k+1}}|P(x)-P(x_k)|dx\le \int_{x_k}^{x_{k+1}}\int_{x_k}^{y}|P'(u)|dudx\\
&\le& (x_{k+1}-x_k)\int_{x_k}^{x_{k+1}}|P'(u)|du.
\end{eqnarray*}
Consequently, we deduce from \eref{pf3eqn1} and \eref{bernineq} that
\begin{eqnarray*}
\lefteqn{\left|\int_\RR |P(x)|dx -\sum_{k=1}^M (x_{k+1}-x_k)|P(x_k)|\right|}\\
& \le& \int_{\RR\setminus [x_1, x_{M+1}]}|P(x)|dx + \left|\int_{x_1}^{x_{M+1}}|P(x)|dx-\sum_{k=1}^M (x_{k+1}-x_k)|P(x_k)|\right|\\
&\le&(1/8)\int_\RR |P(x)|dx +\sum_{k=1}^M \left|\int_{x_k}^{x_{k+1}}|P(x)|dx-(x_{k+1}-x_k)|P(x_k)|\right|\\
&\le& (1/8)\int_\RR |P(x)|dx +\sum_{k=1}^M (x_{k+1}-x_k)\int_{x_k}^{x_{k+1}}|P'(u)|du\\
&\le& (1/8)\int_\RR |P(x)|dx +c\delta(\C')\int_\RR |P'(u)|du\\
&\le&   (1/8)\int_\RR |P(x)|dx +cn\delta(\C')\int_\RR |P(x)|dx.
\end{eqnarray*}
Therefore, choosing $\a_1$ sufficiently small, we obtain for $n=\a_1\delta(\C')^{-1}$,
$$
\left|\int_\RR |P(x)|dx -\sum_{k=1}^M (x_{k+1}-x_k)|P(x_k)|\right| \le (1/4)
\int_\RR |P(x)|dx.
$$
This completes the proof. \qed
\end{proof}

We are now in a position to complete the proof of Theorem~\ref{singquadtheo}.\\

\noindent
\textit{Proof of Theorem~\ref{singquadtheo}.} In this proof only, we define a norm on $\RR^M$ by
$$
\tn (z_1,\cdots,z_M)\tn =\sum_{k=1}^M (x_{k+1}-x_k)|z_k|,
$$
 the sampling operator $\mathcal{U} :\Pi_{n\sqrt{2}}\to \RR^M$ by $\mathcal{U}P=(P(x_1),\cdots,P(x_M))$, and denote the range of $\mathcal{U}$ by $V$. Then \eref{pre_mzineq} shows that the operator $\mathcal{U}$ is invertible on $V$, and we may define a linear functional on $V$ by
$$
x^*(\mathcal{U}P)=\int_\RR P(x)dx.
$$
The dual norm of this functional can be estimated easily using \eref{pre_mzineq}: 
$$
|x^*(\mathcal{U}P)|\le \int_\RR |P(x)|dx \le (5/4)\tn\mathcal{U}P\tn,
$$
so that the norm is $\le 5/4$. In view of the Hahn--Banach theorem, this functional can be extended from $V$ to $\RR^M$, where the extended functional has the same norm as $x^*$; i.e., $\le 5/4$. This extended functional can be identified with $(\tilde{w}_1,\cdots,\tilde{w}_M)\in\RR^M$. Then for $P\in\Pi_{n\sqrt{2}}$,
$$
\sum_{k=1}^M \tilde{w}_kP(x_k)=x^*(\mathcal{U}P)=\int_\RR P(x)dx,
$$
proving \eref{singquadrature}. The norm of the extended functional is 
$$
\max_{1\le k\le M}\frac{|\tilde{w}_k|}{x_{k+1}-x_k} \le (5/4).
$$
This proves \eref{singquadwtbd}.  \qed

Having proved Theorem~\ref{singquadtheo}, the proof of Theorem~\ref{quadtheo} is only a change of variables.

\noindent\textit{Proof of Theorem~\ref{quadtheo}.} 
Let $x_k=y_k\sqrt{2}$, $k=1,\cdots,M+1$, and $\C'=\{x_1,\cdots,x_{M+1}\}$. Then with $A_n'$, $\a_1$ as defined in Theorem~\ref{singquadtheo}, $\delta(\C')=\sqrt{2}\delta(\C)$,  and
 $[-A_n',A_n']\supset [x_1,x_{M+1}]$. Further, with $\a=\a_1/\sqrt{2}$, $n=\a\delta(\C)^{-1}=\a_1\delta(\C')^{-1}$. Therefore, Theorem~\ref{singquadtheo} yields $\tilde{w}_k$ satisfying \eref{singquadrature} and \eref{singquadwtbd}.
 
 If $P(y)=R_1(y)\exp(-y^2/2)$,
$Q(y)=R_2(y)\exp(-y^2/2)$, $R_1, R_2\in\PP_{n^2}$, then $x\mapsto R_1(x/\sqrt{2})R_2(x/\sqrt{2})\exp(-x^2/2)\in \Pi_{n\sqrt{2}}$.  Hence, with $w_k=\tilde{w}_k/\sqrt{2}$, \eref{singquadrature} implies that
\begin{eqnarray*}
\int_\RR P(y)Q(y)dy&=& \int_\RR R_1(y)R_2(y)\exp(-y^2)dy\\
&=&\frac{1}{\sqrt{2}}\int_\RR R_1(x/\sqrt{2})R_2(x/\sqrt{2})\exp(-x^2/2)dx\\
&=& \sum_{k=1}^M w_k R_1(y_k)R_2(y_k)\exp(-y_k^2)\\
&=&\sum_{k=1}^M w_kP(y_k)Q(y_k),
\end{eqnarray*}
which is \eref{quadrature}. Also, \eref{singquadwtbd} implies that
$$
|w_k|=\frac{1}{\sqrt{2}}|\tilde{w}_k| \le \frac{c}{\sqrt{2}}|x_{k+1}-x_k|=c|y_{k+1}-y_k|,
$$
which is \eref{quadwtbd}. \qed

%7
\bhag{Wavelet--like representation}\label{mainsect}

We recall Definition~\ref{besovspacedef} of Besov spaces $B_{p,\rho,\gamma}$. Our first theorem is a characterization of these spaces in terms of an expansion of a function in $L^p$ based either on the Fourier--Hermite coefficients or values of the target function at arbitrary points on $\RR$.

Let $\aleph=\{\nu_n\}$ be a sequence of measures. We define the \textbf{frame operators} by
\be\label{tauopdef}
\tau_n(\aleph;f)=\left\{\begin{array}{ll}
\sigma_1(\nu_0;f), &\mbox{if  $n=0$,}\\
\sigma_{2^n}(\nu_n;f)-\sigma_{2^{n-1}}(\nu_{n-1};f), &\mbox{ if $n=1,2,\cdots$,}
\end{array}\right.
\ee
for all $f$ for which the operators involved are well defined. If each of the measures $\nu_n$ is the Lebesgue measure, we will omit the mention of the sequence in the notations. In this case, the operators are defined for $f\in L^1+L^\infty$. If each $\nu_n$ is a finitely supported measure, then the operators are defined for $f\in X^\infty$.

The following theorem is easy to deduce from  Theorem~\ref{goodapproxtheo} and \cite[Theorem~3.1]{chuiinterp}.

\begin{theorem}\label{globaltheo}
Let $1\le p\le\infty$, $\aleph=\{\nu_n\}$ be a sequence of measures such that each $\nu_n\in MZ(2^{n+1})$. Let $f\in X^p$. \\
{\rm (a)} We have
\be\label{lpseries}
f=\sum_{n=0}^\infty \tau_n(\aleph;f).
\ee
{\rm (b)} If $0<\rho\le \infty$, $0<\gamma<\infty$, then $f\in B_{p,\rho,\gamma}$ if and only if $\{\|f-\sigma_{2^n}(f)\|_p\}\in \seqb_{\rho,\gamma}$. In turn, $f\in B_{p,\rho,\gamma}$ if and only if $\{\|\tau_n(f)\|_p\}_{n=0}^\infty\in \seqb_{\rho,\gamma}$.\\
{\rm (c)}  Let $\aleph=\{\nu_n\}$ be a sequence of measures such that each $\nu_n\in MZ(2^{n+1})$,  $f\in X^\infty$, $0<\rho\le \infty$, and $0<\gamma<\infty$. Then $f\in B_{\infty,\rho,\gamma}$ if and only if $\{\|f-\sigma_{2^n}(\nu_n;f)\|_\infty\}\in \seqb_{\rho,\gamma}$. In turn, $f\in B_{\infty,\rho,\gamma}$ if and only if $\{\|\tau_n(\aleph; f)\|_\infty\}_{n=0}^\infty\in \seqb_{\rho,\gamma}$.\\
{\rm (d)} If $f\in L^2$ then
\be\label{framebd}
\|f\|_2^2\sim \sum_{n=0}^\infty \|\tau_n(f)\|_2^2.
\ee
\end{theorem}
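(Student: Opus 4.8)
The plan is to reduce every assertion to the two--sided estimate
\[
E_{2^n,p}(f)\le \|\sigma_{2^n}(\nu_n;f)-f\|_p\le cE_{2^{n-1},p}(f),
\]
which is Theorem~\ref{goodapproxtheo} applied with $2^n$ in place of $n$ and $\nu=\nu_n$; this is legitimate because $\nu_n\in MZ(2^{n+1})\subseteq MZ(2^n)$ (the quadrature property for $\Pi_{2^{n+1}}$ implies it for the smaller space $\Pi_{2^n}$, and $\mathcal{R}_{2^{n+1}}\subseteq\mathcal{R}_{2^n}$). For part (a) I would telescope the partial sums: the definition \eref{tauopdef} gives $\sum_{n=0}^N\tau_n(\aleph;f)=\sigma_{2^N}(\nu_N;f)$, so that $\|f-\sum_{n=0}^N\tau_n(\aleph;f)\|_p\le cE_{2^{N-1},p}(f)$. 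Since $f\in X^p$, the direct estimate \eref{directtheo} together with $\omega_r(p;f,\delta)\to 0$ as $\delta\to 0$ forces $E_{t,p}(f)\to 0$, which yields the convergence \eref{lpseries}.

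For the first equivalence in (b) (and verbatim in (c), with $\nu_n$ replacing the Lebesgue measure and $p=\infty$), I would read off from the displayed sandwich that $\|f-\sigma_{2^n}(f)\|_p$ is comparable to $E_{2^n,p}(f)$ up to a shift of the index by one; since replacing $a_n$ by $a_{n-1}$ multiplies the $\seqb_{\rho,\gamma}$ (quasi--)norm by at most $2^\gamma$, Definition~\ref{besovspacedef} gives $f\in B_{p,\rho,\gamma}$ iff $\{\|f-\sigma_{2^n}(f)\|_p\}\in\seqb_{\rho,\gamma}$. For the $\tau_n$ equivalence, the upper bound $\|\tau_n(f)\|_p\le\|f-\sigma_{2^n}(f)\|_p+\|f-\sigma_{2^{n-1}}(f)\|_p\le cE_{2^{n-2},p}(f)$ is immediate. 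For the converse I would use part (a) to write $f-\sigma_{2^n}(f)=\sum_{m>n}\tau_m(f)$, so $\|f-\sigma_{2^n}(f)\|_p\le\sum_{m>n}\|\tau_m(f)\|_p$, and then invoke the discrete Hardy inequality: because $\gamma>0$ and $\sum_k 2^{-\gamma k}<\infty$, the tail--summation operator is bounded on $\seqb_{\rho,\gamma}$. This is precisely the abstract mechanism of \cite[Theorem~3.1]{chuiinterp}.

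For part (d) I would expand in the orthonormal Hermite basis. Writing $g_0(j)=h(\sqrt{j})$ and $g_n(j)=h(\sqrt{j}/2^n)-h(\sqrt{j}/2^{n-1})$ for $n\ge 1$, the definition \eref{sigmaopdef} gives $\tau_n(f)=\sum_j g_n(j)\hat{f}(j)\psi_j$, and the orthonormality \eref{hermiteortho} yields $\|\tau_n(f)\|_2^2=\sum_j g_n(j)^2\hat{f}(j)^2$, while $\|f\|_2^2=\sum_j\hat{f}(j)^2$. It therefore suffices to show $\sum_n g_n(j)^2\sim 1$ uniformly in $j$. Since $h$ is non--increasing in $|u|$, each $g_n(j)\in[0,1]$; the sum telescopes to $\sum_n g_n(j)=\lim_{N}h(\sqrt{j}/2^N)=1$; and because $h$ is supported in $[-1,1]$ and equals $1$ on $[-1/2,1/2]$, for each fixed $j$ at most a bounded number $K$ of the $g_n(j)$ are nonzero. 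The upper bound $\sum_n g_n(j)^2\le\sum_n g_n(j)=1$ and the Cauchy--Schwarz lower bound $1=(\sum_n g_n(j))^2\le K\sum_n g_n(j)^2$ then give \eref{framebd}.

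The telescoping and the Parseval computation are routine; the single point demanding genuine care is the converse direction in (b) and (c), namely the discrete Hardy inequality on the quasi--normed spaces $\seqb_{\rho,\gamma}$ when $0<\rho<1$, where one must argue with the $\rho$--triangle inequality rather than the ordinary one. The uniform finite--overlap bound $K$ for the filter differences in (d) is the other place where the specific structure of the low pass filter $h$ (compact support together with flatness near the origin) is used essentially.
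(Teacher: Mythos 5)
Your proposal is correct and follows essentially the route the paper intends: the paper gives no written proof of Theorem~\ref{globaltheo}, stating only that it follows from Theorem~\ref{goodapproxtheo} and the abstract machinery of \cite[Theorem~3.1]{chuiinterp}, and your telescoping, index-shifted sandwich $E_{2^n,p}(f)\le\|f-\sigma_{2^n}(\nu_n;f)\|_p\le cE_{2^{n-1},p}(f)$, discrete Hardy inequality for the tail sums, and Parseval computation with the finite-overlap bound on the filter differences are exactly the standard content of that citation. No gaps.
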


The main purpose of this section is to show that \eref{lpseries} is a wavelet--like representation; i.e., the local behavior of the sequence $\{\tau_n(f)\}_{n=0}^\infty$  characterizes the membership of $f$ in local Besov spaces, defined below.

\begin{definition}\label{locbesovspacedef}
 If $x_0\in\RR$, the \textbf{local Besov space} $B_{p,\rho,\gamma}(x_0)$ is the space of all $f\in X^p$ with the following property : There exists a $\delta>0$ such that for every infinitely differentiable function $\phi$ supported on $\BB(x_0,\delta)$, $\phi f\in B_{p,\rho,\gamma}$.  \qed
\end{definition}

The wavelet--like representation property is described in the following theorem.
\begin{theorem}\label{localtheo}
Let $1\le p\le\infty$, $f\in X^p$,  $x_0\in\RR$, $0<\rho\le \infty$, and $0<\gamma<\infty$. The following statements are equivalent.\\
{\rm (a)} $f\in B_{p,\rho,\gamma}(x_0)$.\\
{\rm (b)} There exists a $\delta>0$ such that  $\{\|f-\sigma_{2^n}(f)\|_{p, \BB(x_0,\delta)}\}_{n=0}^\infty \in \seqb_{\rho,\gamma}$.\\
{\rm (c)} There exists a $\delta>0$ such that $\{\|\tau_n(f)\|_{p, \BB(x_0,\delta)}\}_{n=0}^\infty \in \seqb_{\rho,\gamma}$.
\end{theorem}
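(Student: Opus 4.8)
The plan is to reduce everything to the global characterization in Theorem~\ref{globaltheo}(b) applied to the localized function $\phi f$, using the localization of the kernel $\Phi_n$ (Theorem~\ref{hermite_loc_theo}) to control the discrepancy between the global operator $\sigma_{2^n}(f)$ acting on $f$ and the operator acting on $\phi f$ inside a slightly smaller ball. The equivalence of (b) and (c) is the cheaper half: since $\tau_n(f)=\sigma_{2^n}(f)-\sigma_{2^{n-1}}(f)$, one has $\|\tau_n(f)\|_{p,\BB(x_0,\delta)}\le \|f-\sigma_{2^n}(f)\|_{p,\BB(x_0,\delta)}+\|f-\sigma_{2^{n-1}}(f)\|_{p,\BB(x_0,\delta)}$, and conversely a telescoping/summation-by-parts argument recovers $\|f-\sigma_{2^n}(f)\|$ from the tail $\sum_{m\ge n}\tau_m(f)$; the passage between the two sequence norms is the same standard discrete Hardy-inequality manipulation already invoked for Theorem~\ref{equivtheo}. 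So I would state (b)$\Leftrightarrow$(c) and refer to that routine argument, focusing the real work on (a)$\Leftrightarrow$(b).

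For (a)$\Rightarrow$(b): given $f\in B_{p,\rho,\gamma}(x_0)$, pick the $\delta_0$ from Definition~\ref{locbesovspacedef} and a cutoff $\phi$ that is infinitely differentiable, supported in $\BB(x_0,\delta_0)$, and identically $1$ on $\BB(x_0,\delta)$ for some $\delta<\delta_0$. Then $\phi f\in B_{p,\rho,\gamma}$, so by Theorem~\ref{globaltheo}(b) the sequence $\{\|\phi f-\sigma_{2^n}(\phi f)\|_p\}\in\seqb_{\rho,\gamma}$. On $\BB(x_0,\delta)$ we have $\phi f=f$, so it suffices to show $\|\sigma_{2^n}(f)-\sigma_{2^n}(\phi f)\|_{p,\BB(x_0,\delta)}=\|\sigma_{2^n}((1-\phi)f)\|_{p,\BB(x_0,\delta)}$ decays faster than any power of $2^{-n}$, hence is negligible in every $\seqb_{\rho,\gamma}$ norm. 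Here is where Theorem~\ref{hermite_loc_theo} enters: for $x\in\BB(x_0,\delta)$, the function $(1-\phi)f$ is supported in $\RR\setminus\BB(x_0,\delta)$, so $|x-y|\ge\delta_0-\delta>0$ throughout the integral defining $\sigma_{2^n}((1-\phi)f,x)$, and the kernel bound $|\Phi_{2^n}(x,y)|\le c\,2^n/\max(1,(2^n|x-y|)^S)$ with $S$ chosen large forces super-polynomial (indeed, after integrating against the Lebesgue measure, essentially geometric in $2^{-nS}$) decay. A Hölder/Young estimate combined with Lemma~\ref{fundalemma} packages this into a bound of the form $c_S 2^{-n(S-1)}\|f\|_p$, valid for every $S$, which kills the term in $\seqb_{\rho,\gamma}$.

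For (b)$\Rightarrow$(a): I must produce a single $\delta'>0$ so that for every smooth $\phi$ supported in $\BB(x_0,\delta')$ the sequence $\{\|\phi f-\sigma_{2^n}(\phi f)\|_p\}\in\seqb_{\rho,\gamma}$, i.e. $\phi f\in B_{p,\rho,\gamma}$ by Theorem~\ref{globaltheo}(b). Starting from hypothesis (b) with its ball $\BB(x_0,\delta)$, take $\delta'<\delta$ and write $\phi f-\sigma_{2^n}(\phi f)=\phi\,(f-\sigma_{2^n}(f))+\bigl(\phi\,\sigma_{2^n}(f)-\sigma_{2^n}(\phi f)\bigr)$. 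The first piece is supported in $\BB(x_0,\delta')\subset\BB(x_0,\delta)$ and is bounded in $L^p$ by $\|\phi\|_\infty\|f-\sigma_{2^n}(f)\|_{p,\BB(x_0,\delta)}$, which lies in $\seqb_{\rho,\gamma}$ by hypothesis. The second piece is a commutator $[\phi,\sigma_{2^n}]f$; the main obstacle of the whole proof is to show this commutator is negligible. I would split its $L^p$ norm into the contribution over $\BB(x_0,\delta)$ and over the complement. Off $\BB(x_0,\delta)$ the factor $\phi$ vanishes, leaving $\sigma_{2^n}(\phi f)$ evaluated at points far from $\supp\phi$, which decays super-polynomially exactly as in the (a)$\Rightarrow$(b) step via Theorem~\ref{hermite_loc_theo}. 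On $\BB(x_0,\delta)$, I write the commutator kernel as $\int(\phi(x)-\phi(y))\Phi_{2^n}(x,y)f(y)\,dy$; the smoothness of $\phi$ gives $|\phi(x)-\phi(y)|\le c|x-y|$, and the extra factor $|x-y|$ against the kernel improves the decay by one power of $2^n$, so that the near-diagonal commutator behaves like a convolution with an $L^1$-normalized kernel of mass $O(2^{-n})$, again summable in $\seqb_{\rho,\gamma}$; the far-from-diagonal part within the ball is handled by the power-$S$ tail of $\Phi_{2^n}$ as before. Thus $\|[\phi,\sigma_{2^n}]f\|_p$ is controlled by a constant times $\|f\|_p$ times a geometrically small factor plus the already-controlled local term, giving $\phi f\in B_{p,\rho,\gamma}$ and hence (a). The delicate point throughout is keeping the cutoff radii nested ($\delta'<\delta<\delta_0$) so that every "off-support" integral genuinely has $|x-y|$ bounded below, letting the localization estimate of Theorem~\ref{hermite_loc_theo} do its work.
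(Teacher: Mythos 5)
Your treatment of (a)$\Rightarrow$(b) and of the equivalence (b)$\Leftrightarrow$(c) is essentially the paper's: split $f=\phi f+(1-\phi)f$ with nested cutoffs, kill $\sigma_{2^n}((1-\phi)f)$ on the smaller ball using the localization in Theorem~\ref{hermite_loc_theo} together with Lemma~\ref{fundalemma}, and pass between (b) and (c) via \eref{lpseries} and the discrete Hardy inequalities. (The paper obtains the $L^p$ bound for the off-support term by Riesz--Thorin interpolation between the $L^1$ and $L^\infty$ cases --- this is the only point where the proof of Theorem~\ref{localtheo} differs from that of Theorem~\ref{disclocaltheo} --- and your Schur-type estimate accomplishes the same thing.)

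The gap is in (b)$\Rightarrow$(a). Your commutator kernel is $(\phi(x)-\phi(y))\Phi_{2^n}(x,y)$, and the bound $|\phi(x)-\phi(y)|\le c|x-y|$ buys exactly one power of $2^{-n}$: integrating $|x-y|\cdot 2^n/\max(1,(2^n|x-y|)^S)$ gives a kernel of $L^1$-mass $O(2^{-n})$, and for generic $f$ (say, $f$ concentrated where $\phi'\ne 0$) the commutator genuinely is of size comparable to $2^{-n}\|f\|_p$; the leading term $\phi'(y)(x-y)\Phi_{2^n}(x,y)$ admits no further cancellation, so Taylor-expanding $\phi$ to higher order does not help. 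Since the sequence $\{2^{-n}\|f\|_p\}$ does not belong to $\seqb_{\rho,\gamma}$ once $\gamma>1$ (nor for $\gamma=1$, $\rho<\infty$), your argument closes only for small $\gamma$, whereas the theorem asserts the equivalence for all $\gamma\in(0,\infty)$. This is exactly the ``product problem'' flagged in the introduction: $\phi\,\sigma_{2^n}(f)$ is not a weighted polynomial, so one cannot reach $E_{2^n,p}(\phi f)$ by comparing $\phi f$ with $\sigma_{2^n}(\phi f)$ through the commutator. The paper's route is different: it approximates the cutoff $\phi$ itself by $R_n\in\Pi_{2^n}$ with $\|\phi-R_n\|_\infty\le c2^{-nS}$ (direct theorem, $\phi$ being $2S$ times differentiable), compares $\phi f$ with the \emph{product of two weighted polynomials} $R_n\sigma_{2^n}(f)$ --- which controls the modified degree of approximation $\tilde{E}_{2^{n+1},p}(\phi f)$ of \eref{tildedegapproxdef} rather than $E_{2^{n+1},p}(\phi f)$ --- and then invokes Corollary~\ref{wtchangecor} to convert $\{\tilde{E}_{2^n,p}(\phi f)\}\in\seqb_{\rho,\gamma}$ into $\phi f\in B_{p,\rho,\gamma}$. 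Without that detour through $\tilde{E}$ and the change-of-weight equivalence, the implication (b)$\Rightarrow$(a) does not go through for general $\gamma$.
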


In the case of functions in $X^\infty$, one can obtain a similar theorem also based on the samples of the target function at arbitrary points.

\begin{theorem}\label{disclocaltheo}
Let  $f\in X^\infty$,  $x_0\in\RR$, $0<\rho\le \infty$, and $0<\gamma<\infty$. Let $\aleph=\{\nu_n\}$ be a sequence of measures such that each $\nu_n\in MZ(2^{n+1})$. The following statements are equivalent.\\
{\rm (a)} $f\in B_{\infty,\rho,\gamma}(x_0)$.\\
{\rm (b)} There exists a $\delta>0$ such that $\{\|f-\sigma_{2^n}(\nu_n;f)\|_{\infty, \BB(x_0,\delta)}\}_{n=0}^\infty \in \seqb_{\rho,\gamma}$.\\
{\rm (c)} There exists a $\delta>0$ such that $\{\|\tau_n(\aleph;f)\|_{\infty, \BB(x_0,\delta)}\}_{n=0}^\infty \in \seqb_{\rho,\gamma}$.
\end{theorem}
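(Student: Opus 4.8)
The plan is to prove the three implications (a)$\Rightarrow$(b), (b)$\Leftrightarrow$(c), and (b)$\Rightarrow$(a), following the template of Theorem~\ref{localtheo} but with the Lebesgue operators replaced by the measure operators $\sigma_{2^n}(\nu_n;\,\cdot\,)$. Two mechanisms will do the real work: the kernel localization of Theorem~\ref{hermite_loc_theo} combined with the regularity estimate of Lemma~\ref{fundalemma}, which lets one discard contributions from outside a neighbourhood of $x_0$ at a super-polynomial rate; and the weighted approximation equivalence of Corollary~\ref{wtchangecor}, which is what will allow us to treat the product of a smooth cut-off with a weighted polynomial. Throughout I would use, exactly as in Theorem~\ref{globaltheo}, that the memberships $\nu_n\in MZ(2^{n+1})$ keep the regularity norms $\tn\nu_n\tn_{2^{n+1}}$ under uniform control, so that the constants coming out of Lemma~\ref{fundalemma} and Lemma~\ref{sigmabdlemma} are independent of $n$.

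The equivalence (b)$\Leftrightarrow$(c) is pure sequence bookkeeping and uses no localization. From \eref{tauopdef}, for $n\ge 1$ one has $\tau_n(\aleph;f)=(f-\sigma_{2^{n-1}}(\nu_{n-1};f))-(f-\sigma_{2^n}(\nu_n;f))$, so $\|\tau_n(\aleph;f)\|_{\infty,\BB(x_0,\delta)}$ is dominated by the sum of two consecutive terms of the sequence in (b); since index shifts and finite sums preserve membership in $\seqb_{\rho,\gamma}$, this gives (b)$\Rightarrow$(c). Conversely, Theorem~\ref{globaltheo}(a) and telescoping give $f-\sigma_{2^n}(\nu_n;f)=\sum_{m=n+1}^\infty \tau_m(\aleph;f)$ in $X^\infty$, whence $\|f-\sigma_{2^n}(\nu_n;f)\|_{\infty,\BB(x_0,\delta)}\le\sum_{m>n}\|\tau_m(\aleph;f)\|_{\infty,\BB(x_0,\delta)}$; the discrete Hardy inequality (valid since $\gamma>0$) bounds this tail-sum sequence by the sequence in (c), giving (c)$\Rightarrow$(b) with the same $\delta$.

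For (a)$\Rightarrow$(b), I would fix the $\delta_0$ afforded by Definition~\ref{locbesovspacedef}, choose an infinitely differentiable $\phi$ with $\phi\equiv 1$ on $\BB(x_0,\delta_1)$ and $\supp\phi\subseteq\BB(x_0,\delta_0)$ for some $\delta_1<\delta_0$, and take the conclusion ball $\BB(x_0,\delta)$ with $\delta<\delta_1$. On $\BB(x_0,\delta)$ one has $f=\phi f$, so there $f-\sigma_{2^n}(\nu_n;f)=(\phi f-\sigma_{2^n}(\nu_n;\phi f))-\sigma_{2^n}(\nu_n;(1-\phi)f)$. The first term lies in $\seqb_{\rho,\gamma}$ by Theorem~\ref{globaltheo}(c) applied to $\phi f\in B_{\infty,\rho,\gamma}$. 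For the second, $(1-\phi)f$ is supported in $\RR\setminus\BB(x_0,\delta_1)$, so for $x\in\BB(x_0,\delta)$ every $y$ in the domain of integration has $|x-y|\ge\delta_1-\delta$; the first estimate in \eref{hermite_lockernest} with a large exponent $S$, integrated via Lemma~\ref{fundalemma}, bounds $\|\sigma_{2^n}(\nu_n;(1-\phi)f)\|_{\infty,\BB(x_0,\delta)}$ by $c\,\|f\|_\infty\,2^{-n(S-1)}$. Since $S$ may be taken arbitrarily large, this contribution is super-polynomially small and so belongs to $\seqb_{\rho,\gamma}$.

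The implication (b)$\Rightarrow$(a) is the crux, and the product of the cut-off with a weighted polynomial is the main obstacle. Given the $\delta$ from (b), I would show $\phi f\in B_{\infty,\rho,\gamma}$ for every infinitely differentiable $\phi$ with $\supp\phi\subseteq\BB(x_0,\delta)$, which is exactly Definition~\ref{locbesovspacedef}. By Corollary~\ref{wtchangecor} it suffices to control $\{\tilde E_{2^n,\infty}(\phi f)\}$. Writing $s_n=\sigma_{2^n}(\nu_n;f)=wP_n$ with $P_n\in\PP_{4^n}$ (so $s_n\in\Pi_{2^n}$), and choosing, since $\phi$ is infinitely smooth and compactly supported, $Q_n\in\PP_{4^n}$ with $\|\phi-wQ_n\|_\infty=E_{2^n,\infty}(\phi)$ decaying faster than any power of $2^{-n}$, I would use the product of two weighted polynomials $w^2Q_nP_n=R_nw^2$ with $R_n=Q_nP_n\in\PP_{2\cdot 4^n}=\PP_{(2^n\sqrt2)^2}$ as a competitor, so that $\tilde E_{2^n\sqrt2,\infty}(\phi f)\le\|\phi f-R_nw^2\|_\infty$. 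The decomposition $\phi f-w^2Q_nP_n=\phi(f-s_n)+(\phi-wQ_n)s_n$ then splits the error into a local term $\|\phi(f-s_n)\|_\infty\le\|\phi\|_\infty\|f-\sigma_{2^n}(\nu_n;f)\|_{\infty,\BB(x_0,\delta)}$, which is in $\seqb_{\rho,\gamma}$ by (b), and a term $\|(\phi-wQ_n)s_n\|_\infty\le\|\phi-wQ_n\|_\infty\|s_n\|_\infty$; here $\|s_n\|_\infty\le\|f\|_\infty+\|f-s_n\|_\infty\le c\|f\|_\infty$ stays bounded by Theorem~\ref{goodapproxtheo}, so this term is again super-polynomially small. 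Thus $\{\tilde E_{2^n\sqrt2,\infty}(\phi f)\}\in\seqb_{\rho,\gamma}$, and since $\tilde E_{t,\infty}$ is non-increasing in $t$ with $2^n\le 2^n\sqrt2\le 2^{n+1}$, the sequence $\{\tilde E_{2^n,\infty}(\phi f)\}$ lies in $\seqb_{\rho,\gamma}$ as well, whence $\phi f\in B_{\infty,\rho,\gamma}$ by Corollary~\ref{wtchangecor}. The one delicate point is precisely the passage through $w^2Q_nP_n$: this is the ``product problem'' flagged in the introduction, and Corollary~\ref{wtchangecor} is exactly the device that makes a product of two weighted polynomials admissible as a $\tilde E$-competitor.
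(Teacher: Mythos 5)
Your proposal is correct and follows essentially the same route as the paper's proof: localization of $\Phi_{2^n}$ via Theorem~\ref{hermite_loc_theo} and Lemma~\ref{fundalemma} to discard $\sigma_{2^n}(\nu_n;(1-\phi)f)$ for (a)$\Rightarrow$(b), the competitor $R_n\sigma_{2^n}(\nu_n;f)$ with $R_n=wQ_n\approx\phi$ together with Corollary~\ref{wtchangecor} to resolve the product problem for (b)$\Rightarrow$(a), and \eref{lpseries} plus the discrete Hardy inequalities for (b)$\Leftrightarrow$(c). The only cosmetic differences are that you name the competitor explicitly as $w^2Q_nP_n$ rather than as $R_n\sigma_{2^n}(\nu_n;f)$ with $R_n\in\Pi_{2^n}$, and you invoke super-polynomial decay of $E_{2^n,\infty}(\phi)$ where the paper settles for $O(2^{-nS})$ with one fixed $S>\gamma+3$.
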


We will prove Theorem~\ref{disclocaltheo} in some detail, and then indicate the changes required to prove Theorem~\ref{localtheo}.

\noindent\textit{Proof of Theorem~\ref{disclocaltheo}.}
In this proof, we will choose and fix an integer $S>\gamma+3$. All constants may depend upon $x_0$, $\delta$, and S. 

Let (a) hold, and $\delta>0$ be such that for every infinitely differentiable function $\phi$ supported on $\BB(x_0,\delta)$, $\{E_{2^n,\infty}(\phi f)\}_{n=0}^\infty \in \seqb_{\rho,\gamma}$. In this part of the proof, let  $\phi$ be an infinitely differentiable function suppored on $\BB(x_0,\delta)$ and equal to $1$ on $\BB(x_0,3\delta/4)$. All the constants in this proof will depend upon $x_0$ and $\delta$. We  use 
the first estimate in \eref{hermite_lockernest} and \eref{fundaineq} (with $S+1$ in place of $S$)   to conclude that  for $x\in I=\BB(x_0,\delta/2)$, 
\bea\label{pf6eqn1}
\lefteqn{|\sigma_{2^n}(\nu_n;(1-\phi)f,x)|=\left|\int_{\RR\setminus \BB(x_0,3\delta/4)}(1-\phi(y))f(y)\Phi_n(x,y)d\nu_n(y)\right|}\nonumber\\
&\le& c\|f\|_\infty \int_{\RR\setminus \BB(x_0,3\delta/4)}|\Phi_n(x,y)|d|\nu_n|(y)\nonumber\\
&\le& c\|f\|_\infty \int_{\RR\setminus \BB(x,\delta/4)}|\Phi_n(x,y)|d|\nu_n|(y)
\le c2^{-nS}\|f\|_\infty.
\eea
Therefore, \eref{goodapprox} leads to
\bea\label{pf6eqn2}
\lefteqn{\|f-\sigma_{2^n}(\nu_n;f)\|_{\infty, I}=\|\phi f-\sigma_{2^n}(\nu_n;f)\|_{\infty, I}}\nonumber\\
&\le& \|\phi f-\sigma_{2^n}(\nu_n;\phi f)\|_{\infty, I}+\|\sigma_{2^n}((1-\phi)f\|_{\infty, I}\nonumber\\
&\le& c\left\{E_{2^{n-1},\infty}(\phi f) 
+2^{-nS}\|f\|_\infty\right\}.
\eea
Since  $S>\gamma+3$, each of the sequences $\{E_{2^{n-1},\infty}(\phi f)  \}_{n=0}^\infty$ and
 $\{2^{-nS}\|f\|_\infty  \}_{n=0}^\infty$  belongs to $\seqb_{\rho,\gamma}$. Therefore, \eref{pf6eqn2} implies the statement in part (b).

Conversely, let part (b) hold, and $\phi$ be any infinitely differentiable function supported on $I=\BB(x_0,\delta)$.  Since $\phi$ is in particular $2S$ times continuously differentiable,  the direct theorem of approximation \cite[Theorem~4.2.1]{mhasbk} shows that  for $n\ge c$, there exists $R_n\in\Pi_{2^n}$ such that $\|R_n\|_\infty\le c$, and
\be\label{pf6eqn3}
\|\phi-R_n\|_\infty \le c2^{-nS}.
\ee
Therefore, using the notation introduced in \eref{tildedegapproxdef},
\begin{eqnarray*}
\lefteqn{\tilde{E}_{2^{n+1},\infty}(\phi f)\le \|\phi f-R_n\sigma_{2^n}(\nu_n;f)\|_\infty}\\
&\le& \|\phi(f-\sigma_{2^n}(\nu_n;f))\|_\infty + \|(\phi-R_n)\sigma_{2^n}(\nu_n;f)\|_\infty \\
&\le & c\left\{ \|(f-\sigma_{2^n}(\nu_n;f)\|_{\infty,I} + \|\phi-R_n\|_\infty\|\sigma_{2^n}(\nu_n;f)\|_\infty\right\}\\
&\le& c\left\{\|(f-\sigma_{2^n}(\nu_n;f)\|_{\infty,I} + c2^{-nS}\|f\|_\infty \right\}.
\end{eqnarray*}
As before, the statement in part (b) now leads to $\{\tilde{E}_{2^n,\infty}(\phi f)\}_{n=0}^\infty \in \seqb_{\rho,\gamma}$. In view of Corollary~\ref{wtchangecor}, this implies the statement in part (a).

The equivalence of parts (b) and (c) follows from \eref{lpseries}, and an application of the discrete Hardy inequalities \cite[p.~27]{devlorbk}. \qed

\noindent\textit{Proof of Theorem~\ref{localtheo}.}
The proof is almost verbatim the same as that of Theorem~\ref{disclocaltheo}, except for one difference, which we now point out. We continue the notation as in the proof of (a)$\Rightarrow$ (b). All the constants in this proof will depend upon $x_0$ and $\delta$. As shown in \eref{pf6eqn1} (with the Lebesgue measure in place of $\nu_n$),
\be\label{pf7eqn1}
\|\sigma_{2^n}((1-\phi)f\|_{\infty, I}\le c2^{-nS}\|f\|_\infty, \qquad f\in L^\infty.
\ee
If $f\in L^1$, then \eref{hermite_lockernest} (with $S+1$ in place of $S$) implies that
\bea\label{pf7eqn2}
\lefteqn{\int_I |\sigma_{2^n}((1-\phi)f,x)|dx}\nonumber\\
& \le& \int_I \int_{\RR\setminus \BB(x_0,3\delta/4)}|(1-\phi(y))f(y)||\Phi_n(x,y)|dydx\nonumber\\
&\le& c\|f\|_1\sup_{y\in \RR\setminus \BB(x_0,3\delta/4)}\int_I |\Phi_n(x,y)|dx \le c2^{-nS}\|f\|_1.
\eea
The Riesz--Thorin interpolation theorem applied with the operator\\ $f\mapsto \sigma_{2^n}((1-\phi)f)$, together with \eref{pf7eqn1} and \eref{pf7eqn2} now implies that for $1\le p\le\infty$,
$$
\|\sigma_{2^n}((1-\phi)f\|_{p, I}\le c2^{-nS}\|f\|_p, \qquad f\in L^p.
$$
The remainder of the proof is almost verbatim the same as that of Theorem~\ref{disclocaltheo}. \qed

\begin{acknowledgement}
The research of HNM is supported
in part by Grant W911NF-15-1-0385 from the U. S. Army Research Office. We thank  the editors for their  kind invitation to submit this paper.
\end{acknowledgement}
%

%\input{referenc}
%R
\bibliographystyle{plain}
%\bibliography{/Users/hrushikesh/Documents/hrushikesh/pctexfiles/hrushikesh}
\bibliography{hermitebib}
\end{document}